\newtheorem{defin}{Definition}[section]
\newtheorem{lemma}[defin]{Lemma}
\newtheorem{theorem}[defin]{Theorem}
\newtheorem{corollary}[defin]{Corollary}
\newtheorem{example}[defin]{Example}
\newtheorem{remark}{Remark}
\title{Counting for rigidity under projective transformations in the plane}
\author{Leah Wrenn Berman \and Signe Lundqvist \and Bernd Schulze \and Brigitte Servatius \and Herman Servatius \and Klara Stokes \and Walter Whiteley}
\date{}
\begin{document}
\maketitle
\abstract{Let $P$ be a set of points and $L$ a set of lines in the (extended) Euclidean plane, and $I \subseteq P\times L$, where $i =(p,l) \in I$ means that point $p$ and line $l$ are incident. The incidences can be interpreted as quadratic constraints on the homogeneous coordinates of the points and lines. We study the space of incidence preserving motions of the given incidence structure by linearizing the system of quadratic equations. The Jacobian of the quadratic system, our projective rigidity matrix, leads to the notion of independence/dependence of incidences. Column dependencies correspond to infinitesimal  motions.  
Row dependencies or self-stresses allow for new interpretations of classical geometric incidence theorems. We show that self-stresses are characterized by a 3-fold balance. 
As expected, infinitesimal (first order) projective rigidity as well as second order projective rigidity imply  projective rigidity but not conversely. Several open problems and possible generalizations are indicated.}

\section{Introduction}
In the last half century there has been a careful exploration of what logical and algebraic forms the theorems of projective geometry take, building on  Cayley Algebra \cite{WhiteHand}.  The theorems can be expressed through products of invariant polynomials, which in turn must be expressible as meets (intersections)  and joins (unions) of brackets and algebraic combinations of these invariants \cite{WhiteHand, logic1, logic2, logic3, logic4}. This analysis also addressed the form of the reasoning for these theorems. That was, in part, the program of Gian-Carlo Rota -- looking for what properties would extend from finite subsets of plane projective geometry to the theory of dependence and independence of general matroids or pre-geometries.

The language of Cayley Algebra expresses the intersection of two projective subspaces $A$ and $B$ as $A\wedge B$ and the spanning subspace which is generated by $A$ and $B$ as $A\vee B$.  There is a fully developed algebra for combining these operations and using identities to determine when two expressions are equivalent \cite{WhiteHand}.  The underlying identities (\emph{syzygies} in the vocabulary of Sylvester)  built on determinants of matrices of shape $(n+1)\times (n+1)$ for dimension $n$ and identities for products of such determinants.  We will not be using this algebra in our reasoning here, though we will use the basic vocabulary in our figures of the configurations.

Here we will develop projective analogs of bar and joint rigidity, using the Jacobian of the incidence constraints of points and lines ({\em the projective rigidity matrix}), and the projective infinitesimal motions (the kernel of this Jacobian) and the projective self-stresses -- the row dependencies or co-kernel of this Jacobian.
As this presentation evolves, we will present evidence for the {\em conjecture}  that {\em all projective theorems are expressible as self-stresses of this projective rigidity matrix and conjunctions of such self-stresses}.


{\bf History  and Applications of Projective Geometry}
\label{applications}

Projective geometry has a rich history dating back to the 19th century. Sources of projective geometric questions and forms of expression can be found in David Henderson's book entitled: Experiencing Geometry on Plane and Sphere \cite{experience}. We mention here just a few applications:

\begin{itemize}
\item Geometric Rigidity Theory:  a comprehensive summary on the rigidity of 
 bar-joint frameworks and related structures through a projective lens is given in ~\cite{Schulze-Whiteley-2021}.

\item Engineering and Architecture: reciprocal diagrams and the Maxwell-Cremona correspondence; see  \cite{ran,maxwell1864xlv,WW82,CW93} for foundational work.

\item Approximation Theory: 
See~\cite{analogy}, for example.
 The survey paper~\cite{Schulze-Whiteley-2021} and its companion paper~\cite{Schulze-Whiteley-2024} summarize projective geometric connections of the developing work on geometric rigidity, in all dimensions, as well as other applications amenable to these approaches, such as splines in Approximation Theory. 

\item Computational Geometry: 
Problems in computer vision naturally involve collinearity and concurrence~\cite{KANATANI1991333}.

\item Klein's hierarchy: More transformations means fewer invariants.  Projective geometry has most transformations, and hence fewest invariants.

\end{itemize}

Each of these applications expresses the geometry and the analysis in a distinct way with its own questions.  Looking at what is common helps focus on the shared projective geometry and shared questions.

{\bf Preliminaries: }

\label{classical}
%
The real projective plane can be constructed
from $\mathbb{R}^3$
by letting the $1$-dimensional subspaces
of $\mathbb{R}^3$ define the points and the $2$-dimensional subspaces define the lines. This construction is called {\em projectivization} of the vector space. The homogeneous coordinates of a point in the projective plane are $(x_0:y_0:z_0)$ where $[x_0,y_0,z_0]$ is any vector in the corresponding $1$-dimensional subspace. The homogeneous coordinates of a line are $(a_0:b_0:c_0)$ where $a_0x+b_0y+c_0z=0$ is any (homogeneous) linear equation defining the corresponding $2$-dimensional subspace. A point with homogeneous coordinates $(x_0:y_0:z_0)$ is incident to (or {\em on}) a line  with homogeneous coordinates $(a_0:b_0:c_0)$ if $a_0x_0+b_0y_0+c_0z_0=0$. 

The projective plane has a duality; there is a one-to-one correspondence between the points and the lines that preserves the incidences. In terms of vector spaces, a projective duality is a map that sends a $1$-dimensional subspace of $\mathbb{R}^3$ to a $1$-dimensional subspace of the dual vector space. A vector $[a_0,b_0,c_0]$ of the dual vector space is a linear form, that is, a linear map from $\mathbb{R}^3$ to $\mathbb{R}$ sending $[x,y,z]\mapsto a_0x+b_0y+c_0z$. Each linear form $[a_0,b_0,c_0]$ defines a line of the projective plane, by requiring  $a_0x+b_0y+c_0z=0$.

{\em The affine plane:}
Let $l_{\infty}$ be any line of the projective plane  and call it the line at infinity. The set of all the points outside $l_{\infty}$ (the finite points) together with the set of all the lines except $l_{\infty}$ defines an affine plane. 
We select $l_{\infty}$ to be the line defined by the equation $z=0$. Then the homogeneous coordinates of a finite point $(x'_0:y'_0:z_0)$ can be normalized to $(x_0:y_0:1):=(\frac{x'_0}{z_0}:\frac{y'_0}{z_0}:1)$, where $(x_0,y_0)$ are the affine coordinates of the finite point. The projective dual of a finite point $(x_0:y_0:1)$ is a line with equation $x_0x+y_0y+1=0$. The affine plane is not preserved by the projective duality; all points have a dual line, but not all lines are duals of points. Specifically, lines through the origin of the affine plane are mapped to points of the line at infinity. By selecting another line to be $l_{\infty}$, one obtains another affine plane, but all these affine planes are isomorphic. 

{\em Incidence theorems:}
In projective geometry many results are concerned with the fact that some incidences are implied by other incidences. For example, the two following theorems are of fundamental importance. 
\begin{theorem}[Theorem of Desargues]
\label{desarguestheorem}
Two triangles are in perspective from a point if and only if they are perspective from a line. 
\end{theorem}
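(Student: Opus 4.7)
The cleanest classical route is to lift the planar configuration into three-dimensional projective space, where the ``only if'' direction is almost tautological, and then to project back down into the plane. The ``if'' direction I would then obtain by projective duality, since the statement is self-dual (``collinearity of three meet-points'' is dual to ``concurrence of three join-lines'').

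First I would handle the three-dimensional case. Suppose the triangles $ABC$ and $A'B'C'$ lie in distinct planes $\pi$ and $\pi'$ of a three-dimensional projective space, and suppose the three lines $AA'$, $BB'$, $CC'$ concur at a point $O$. Then $AA'$ and $BB'$ are coplanar (they both pass through $O$), so the sides $AB$ and $A'B'$ lie in that common plane and therefore meet in a point $P$. Since $P\in AB\subset \pi$ and $P\in A'B'\subset\pi'$, the point $P$ lies on $\ell := \pi\cap\pi'$. The same argument applied to the other two pairs of sides places all three intersection points on $\ell$, yielding the line of perspective.

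Next I would reduce the planar statement to this three-dimensional one. Given two triangles and a center $O$ in a fixed plane $\pi_0$, I pick an auxiliary point $Q$ off $\pi_0$ and choose points $A'',B'',C''$ on the lines $QA',QB',QC'$ so that $A''B''C''$ lies in a plane $\pi_1 \neq \pi_0$ and is still in perspective with $ABC$ from some point $O''$ on $OQ$. Applying the three-dimensional case produces a line of perspective for $ABC$ and $A''B''C''$ in space, and projecting back from $Q$ into $\pi_0$ transports this line to the desired collinearity of the three planar meet-points. For the converse, I would apply the above ``only if'' direction to the dual configuration: the dual of ``two triangles perspective from a line'' is ``two triangles perspective from a point'' in the dual plane, and the preservation of incidences under duality closes the argument.

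The main obstacle is the planar lifting step, where one must verify that $Q$ and the lift $A''B''C''$ can be chosen so that $\pi_1 \neq \pi_0$ and no unintended incidences are created; this is a general-position argument, but it has to be made carefully, and degenerate placements of the six vertices (coincidences, extra collinearities, or $O$ lying on a side) must be addressed either by perturbation or by direct verification. Everything else in the proof is essentially linear-algebraic and would be transparent in the bracket/Cayley-algebra formalism mentioned earlier in the paper, where the whole theorem becomes a single syzygy identity among $3\times 3$ determinants.
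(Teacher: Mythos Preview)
The paper does not prove this statement: Theorem~\ref{desarguestheorem} is stated in the preliminaries as a classical result, with no proof supplied. So there is nothing to compare your proposal against.

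That said, your proposal is the standard and correct classical argument. The three-dimensional case is indeed immediate once you observe that each pair of corresponding sides lies in a plane through $O$, and the planar reduction via lifting through an auxiliary point $Q$ off the plane is the usual move (as in Hilbert--Cohn-Vossen or Coxeter). Obtaining the converse by duality is also standard and clean. Your caution about the lifting step is well placed: one does need to check that the lifted triangle is not coplanar with the original and that the center of perspective survives the lift, but these are genuine general-position choices and cause no real difficulty over $\mathbb{R}$. If you want to avoid that step entirely, the bracket-algebra route you allude to at the end---writing the theorem as a single syzygy among $3\times 3$ determinants---gives a uniform algebraic proof with no case analysis, and is closer in spirit to the Cayley-algebra viewpoint the paper emphasizes elsewhere.
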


\begin{theorem}[Theorem of Pappus]
\label{pappustheorem}
The intersection points of opposite sides of a hexagon whose vertices alternate between two lines are collinear.
\end{theorem}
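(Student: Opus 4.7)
I would prove Pappus's theorem by choosing a convenient projective frame and then verifying the collinearity conclusion by a direct computation in homogeneous coordinates.

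First, I would use the transitivity of the projective linear group on ordered pairs of distinct lines to reduce to the case where the two lines carrying the alternating vertices of the hexagon are $\ell_1=\{z=0\}$ and $\ell_2=\{y=0\}$. Writing the alternating vertices as $A_i=(a_i:1:0)$ on $\ell_1$ and $B_j=(b_j:0:1)$ on $\ell_2$ gives six scalar parameters $a_1,a_2,a_3,b_1,b_2,b_3$. Each side $A_iB_j$ of the hexagon is then the cross product of the corresponding point vectors, and each intersection point of a pair of opposite sides is the cross product of the corresponding line vectors; so each of the three intersection points $X,Y,Z$ admits a closed-form homogeneous coordinate vector whose entries are low-degree polynomials in the six parameters.

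To conclude, I would verify that the $3\times 3$ matrix whose rows are $X,Y,Z$ has vanishing determinant, which is the projective collinearity criterion. The main obstacle is presenting the calculation transparently rather than as an opaque determinant expansion: one expects the rows to be visibly linearly dependent after the right labeling of the hexagon, so the final step should ideally reduce to exhibiting an explicit linear relation among $X,Y,Z$. To guide the bookkeeping I would exploit the $S_3 \times S_3$ symmetry of the configuration under joint permutations of the $a_i$ and the $b_j$, which constrains the form of the determinant sharply and pins down the cancellation.

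An entirely parallel proof is available in the bracket/Cayley algebra of \cite{WhiteHand}, where the identity becomes a standard syzygy among brackets, but consistent with the authors' stated preference in the introduction we would avoid that formalism here.
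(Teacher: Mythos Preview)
Your proof plan is sound: normalizing the two carrier lines to $\{z=0\}$ and $\{y=0\}$, writing the six vertices with scalar parameters, computing the three intersection points as iterated cross products, and checking that the resulting $3\times 3$ determinant vanishes is a standard and correct route to Pappus's theorem. The symmetry observation is a reasonable device for organizing the algebra, though in practice the determinant simply collapses after expansion without needing much structural insight.

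However, there is nothing to compare against: the paper does not prove Theorem~\ref{pappustheorem}. It is stated in the preliminaries as a classical incidence theorem, alongside Desargues's theorem, and is used thereafter only as input---for instance, to argue that the Pappus configuration carries a dependency among its $27$ incidence constraints. The paper's contribution is not a new proof of Pappus but rather a reinterpretation of such incidence theorems as row dependencies (self-stresses) of the projective rigidity matrix. So your proposal is a correct proof of a theorem the authors take for granted; if you want to align with the paper's actual content, the relevant exercise would be to exhibit the self-stress on the Pappus configuration via the three-fold balance, as in the worked example accompanying Figure~\ref{pappussintegerfig}.
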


\section{Projective configurations}
A projective configuration of points and lines is a subset of points and a subset of lines of a projective plane with the induced incidence relation. 
The underlying combinatorial object of such a general configuration of points and lines is a bipartite graph $(P,L,I)$, with vertex set $P \cup L$ and edge set $I$.  Note that this bipartite graph has girth at least~6, since two distinct points in the projective plane uniquely determine a line  and two distinct lines intersect in a unique point.

In the classical literature, the name configuration of points and lines is often reserved for configurations  for which there are integers $k$ and $r$ such that there are $k$ points incident with every line and $r$ lines incident with every point \cite{Gru2009b}. A configuration with $v$ points and $b$ lines is called a $(v_r,b_k)$-configuration. When $r=k$, then also $v=b$ and the configuration is called a balanced configuration. A balanced configuration $v$ points and $k$ points on every line is called a $(v_k)$-configuration. 

In this paper, we study incidence preserving  motions of configurations of points and lines in the real projective plane. A point or a line in the projective plane has two degrees of freedom. An incidence between a point and a line constrains one degree of freedom, by requiring that a point remains on a line throughout the motion. In the following examples, we will consider the projective degrees of freedom of some familiar configurations of points and lines.

Loosely speaking, a set of incidences will be called \emph{dependent} if at least one of the incidences is implied by the other incidences, and hence does not constitute  any additional constraint on the system. Otherwise,  the incidences will be called \emph{independent}.

\subsection{The complete quadrangle and the complete quadrilateral}  
A complete quadrangle, see Figure~\ref{quadra}~b) is the $(4_3,6_2)$-configuration consisting of $4$ points in general position and the $6$ lines spanned by the $6$ pairs of points. There are $2$ points on every line and $6$ lines, hence $2\cdot 6=12$ incidences.

A complete quadrilateral, see Figure~\ref{quadri}~a) is the $(6_2,4_3)$- configuration consisting of $4$ lines in general position and their $6$ points of intersection. The complete quadrangle and the complete quadrilateral are projectively dual configurations; this implies that the number of incidences of the complete quadrilateral also is $12$. 

 The points in Figure~\ref{quadri} are labeled with their homogeneous coordinates. 
\begin{figure}[htb]
\centering
a) \includegraphics[width=.35\textwidth]{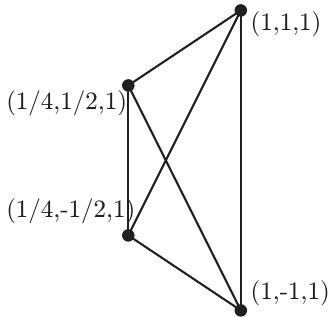}
\quad
b) \includegraphics[width=.35\textwidth]{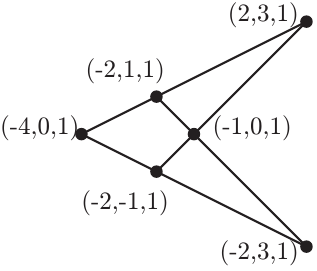}
\caption{a) Complete quadrangle  \label{quadri}
b) Complete quadrilateral\label{quadra}}
\end{figure}

The reader may easily verify that the coordinates of the four lines in Figure~\ref{quadri}~a) are the coordinates of the four points in 
Figure~\ref{quadra}~b) and the coordinates of the six lines in Figure~\ref{quadra}~b) are the coordinates of the six points in Figure~\ref{quadri}~a). Therefore these configurations are dual for the polarity defined by the quadratic form corresponding to the imaginary projective quadric with equation $x^2+y^2+z^2=0$. The matrix defining this quadratic form is the identity matrix.
We obtain a complete quadrilateral from a complete quadrangle by applying this polarity, thereby exchanging the role of points and lines. 

In Section~\ref{sec1} we will see how to set up the so-called projective rigidity matrix of a projective configuration using its coordinates. 
For the complete quadrilateral 
and the complete quadrangle this matrix has $ 2 \times 4 + 2\times 6 = 20$ columns and a row for each of the 12 incidences. In  Section~\ref{trivial} we will prove that the space of trivial projective motions has rank~8. Since $20-8= 12$ this implies that this small configuration is independent and has full rank -- it is projectively {\em isostatic}.



\subsection{The Pappus configuration}
The Pappus configuration (see Figure~\ref{pappus}) is the configuration of the $9$ points and the $9$ lines involved in Pappus' theorem (Theorem \ref{pappustheorem}).  

\begin{figure}[htb]
\centering
\captionsetup{width=0.9\textwidth}
\animategraphics[controls]{3.5}{figures/pappusmoveb}{10}{18}
\caption{A movable configuration. (The animation controls work depending on one's pdf viewer.) \label{pappus}}
\end{figure} There are three points on each line, and each point is the intersection of three of the lines. There are 27 incidences constraining  the $2 \times 9 + 2 \times 9 -8=28$
 degrees of freedom. If the incidences are independent, then the  Pappus configuration will have exactly one degree of freedom. We will show that there is more than one degree of freedom, indicating a dependence in the system.
 
 After pinning two points on two of the lines -- the white vertices in Figure~\ref{pappus} -- we are free to chose a third point on each one of the two lines and then uniquely complete the figure. So, there are two degrees of freedom in the construction, one for each choice of point. Therefore, the 27 incidences cannot be independent.


\subsection{The Desargues configuration}\label{desarguessubsec}

The Desargues configuration occurs as a result of the Desargues Theorem (Theorem \ref{desarguestheorem}), which states that two triangles are perspective from a point if and only if they are in perspective from a line.  The points of the configuration are the six points of the two triangles, $\{a,b,c\}$ and $\{a',b',c'\}$, the three intersection points of the corresponding sides, $\{d,e,f\}$, and the point of perspective.

The lines are the  six lines defining the sides of the two triangles, the three lines going through the point of perspective and one vertex from each triangle, and the line of perspectivity of the corresponding sides.  It is hence a configuration of $10$ points and  $10$ lines, with three points on every line and three lines incident to each point, a balanced $(10_3)$-configuration.

\begin{figure}[htb]
\centering
\captionsetup{width=0.9\textwidth}
a)\includegraphics[scale=.63]{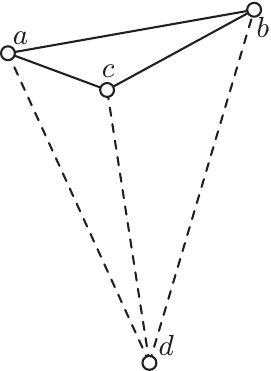}
b)\includegraphics[scale=.63]{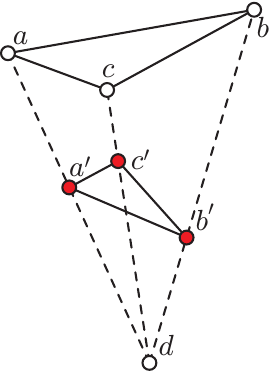}
c)\includegraphics[scale=.63]{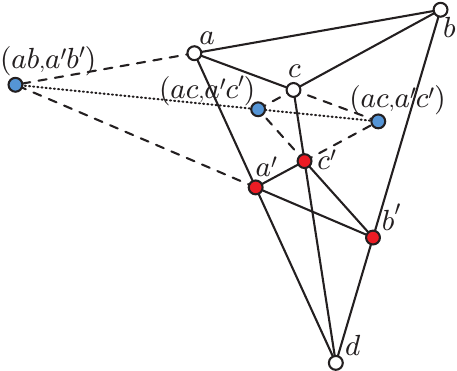}
\caption{The construction of a Desargues Configuration.  a) an initial isostatic $K_4$: $a,b,c,d$ pinned. (b) Adding three points (red) on three of the edges to add a second triangle $a',b', c'$ and c) constructing the line of perspective of the two triangles from three pairs of edges where the collinearity of the points is a constraint guaranteed by Desargues Theorem -- a constraint which is dependent on the other constraints.}
\end{figure}

From this construction, we see there were three free choices: $a',b',c'$ along fixed lines.  The free choices represent a three-dimensional space of non-trivial projective motions.
The configuration has $10$ points, $10$ lines and $30$, so there are $40-8=32$ degrees of freedom constrained by $30$ incidences. Because of Desargues's theorem, the incidences are dependent, so the space of motions is three-dimensional rather than two-dimensional. 

Desargues's theorem, just like Pappus's theorem, can be regarded as merely inducing constraints which we already have
on the list, hence asserting a dependence among the constraints, and increasing 
the degrees of freedom of the system.

\subsection{The Pascal configuration}\label{subsec:pascal}

Pascal's theorem states that if a hexagon is inscribed in a conic, then the three points in which opposite sides meet are collinear. See Figure \ref{mrpascallinefig}. 
Note that Pappus's theorem is a special form of Pascal's theorem as one can interpret two lines as a degenerate conic.

The line, whose existence is asserted by Pascal's theorem, is called the Pascal line of the hexagon. The configuration in Figure \ref{mrpascallinefig} 
\begin{figure}[htb]
    \centering    
    \captionsetup{width=0.9\textwidth}
a) \includegraphics[scale=.35]{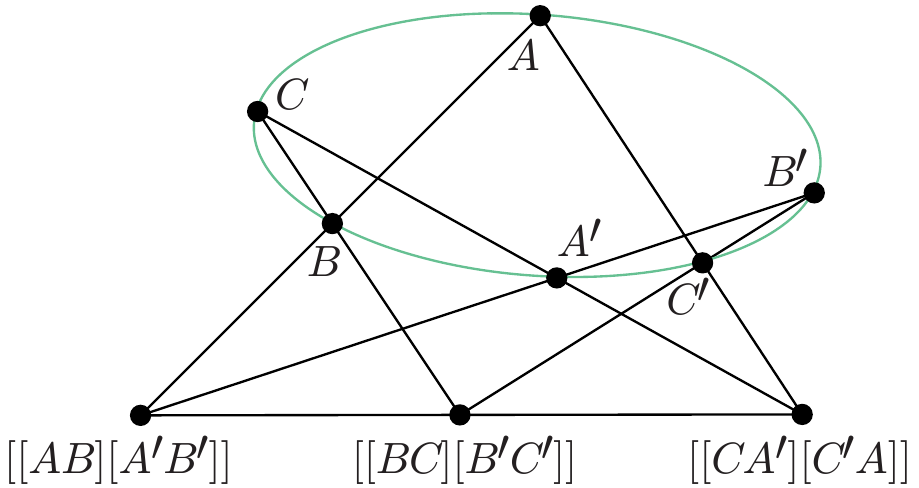}
b) \animategraphics[controls,scale=.35]{2.5}{figures/pascalanimb}{102}{111}
    \caption{a) A hexagon with a Pascal line. b) Animation with $A$, $B$, 
    $A'$, $B'$, pinned. $X$, $Y$, free to move subject to the collinearity $[[[AB][A'B']],X,Y]=0$. Points $C = [[BX][A'Y]]$, $C' = [[AY][B'X]]$ determined. \label{mrpascallinefig}}
\end{figure}
has $9$ points and $7$ lines, and $21$ incidences,
giving $2 \times 7 + 2 \times 9 - 8=24$ degrees of freedom, constrained by $21$ incidences. The space of non-trivial motions is expected to be three-dimensional. And, it is in fact three-dimensional. Starting out with four (fixed) points of the hexagon, the fifth point can be chosen arbitrarily, giving two degrees of freedom. The first five points determine a conic. There is one degree of freedom for the sixth point of the hexagon, as it needs to be placed on the conic determined by the first five points. The remaining three points, and the Pascal line, are determined by the initial six points on the conic, and the combinatorial structure - i.e., the choice of hexagon. So Pascal's theorem does not induce any extra constraints on the configuration in Figure \ref{mrpascallinefig}.

However, 6 points can be arranged in cyclic order in 60 different ways, so for a given set of 6 points on a conic there are 60 Pascal lines. Similarly, given the 6 conic points, there are 15 sets of 4 points to be partitioned into opposite sides of a hexagon in 3 different ways, giving 45 points. Since there are four possible ways to complete two opposite sides to a hexagon on the 6 points, each of the 45 points is the intersection point of 4 Pascal lines. We say that the 60 Pascal lines and the 45 points form the Pascal configuration. We count the number of incidences $I=3\times 60 = 4\times 45 = 180$, which is much smaller than $2(|P|+|L|)=2(60+45)$.

But this is not the end of the story. What is called Pascal's hexagrammum mysticum is a configuration of 95 points (60 Kirkman points, 20 Steiner points, 15 Salmon points) and 95 lines (60 Pascal lines, 20 Cayley lines, 15 Pl\"{u}cker lines) with $60\times 4+15\times 4 +20\times 7 =440$ incidences, which clearly must be dependent since $440 > 2(95+ 95)$. A good reference is
\cite{Demyst}.

 \section{Rigidity notions for the projective plane}

 \label{sec1}

\subsection{Realization space of a projective configuration}

Given an incidence geometry $S=(P,L,I)$, we  assign point coordinates $\mathbf{p_j}=(x_j:y_j:z_j)$ to each point $p_j \in P$, and line coordinates $\mathbf{l_i}=(a_i:b_i:c_i)$ to each line $l_i \in L$. The $\textit{realization space}$ of the incidence geometry is the real projective variety $V(S)$ defined by a quadratic equation

\begin{equation}
    a_ix_j+b_iy_j+c_iz_j=0
\end{equation}
\noindent
for each incidence $(l_i,p_j) \in I$. 

A point on the realization space is a realization of the incidence geometry as points and lines in the real projective plane. Note also that the realization space consists of \textit{all} configurations realizing the incidence geometry, including for example realizations where all points are given the same coordinates.

\subsection{Rigidity}

A configuration $(S, \mathbf{p}, \mathbf{l})$ is said to be \textit{rigid}, if all configurations in a neighborhood of $(S, \mathbf{p}, \mathbf{l})$ in the realization space can be obtained from $(S, \mathbf{p}, \mathbf{l})$ by a projective transformation. Otherwise, $(S, \mathbf{p}, \mathbf{l})$ is said to be \textit{flexible}. Equivalently, a configuration is flexible if there is a smooth path $(\mathbf{p}(t),\mathbf{l}(t))$ in $V(S)$ such that $\mathbf{p}(0)=\mathbf{p}$, $\mathbf{l}(0)=\mathbf{l}$, and there is some $t \in (0,1]$ for which $(S, \mathbf{p}(t), \mathbf{l}(t))$ cannot be obtained from $(S, \mathbf{p}, \mathbf{l})$ by a projective transformation \cite{proj_paper1}.  

\subsection{Infinitesimal rigidity}

Determining rigidity of configurations is in general difficult, involving
the solution of many quadratic equations. A more tractable approach is to consider the rank of the tangent space of the
realization space.

 Let  $\mathbf{l}_i$ be  finite line  given by its homogeneous coordinates
$\mathbf{l}_i=(a_i\colon b_i\colon 1)$, and  $\mathbf{p}_j$ be a finite point given by $\mathbf{p}_j=(x_j\colon y_j\colon 1)$. Then the  equation 
\begin{equation}
    a_i x_j + b_i y_j + 1=0
    \label{incidence_eq}
\end{equation}
describes that the point $\mathbf{p}_j$ and the line $\mathbf{l}_i$ are incident. Differentiating this constraint with respect to the time variable $t$, and evaluating at 0, then gives 
\begin{equation}
    \mathbf{p}_j\cdot \Delta \mathbf{l}_i + \mathbf{l}_i\cdot \Delta \mathbf{p}_j=0
    \label{Jacobian_eq}
\end{equation}
for each incidence $(p_j, l_i) \in I$
The coefficient matrix of this linear system is the \emph{(projective) rigidity matrix} $M(S,\mathbf{p}, \mathbf{l})$ whose row corresponding to the incidence $(p_j, l_i)$ has the following form:
\[
\left[\begin{array}{ c ccccc ccccc }
  &0&\dots  & x_j \ y_j &\dots&0&\ldots& a_i\  b_i &\ldots&0& 
\end{array}\right],
\]
where the $x_j,y_j$ entries are under the columns for $l_i$ and the $a_i,b_i$ entries are under the columns for $p_j$.

Irrespective whether or not an actual finite motion exists, a solution to the system \label{infmeaseq} is called
an {\em infinitesimal flex}.  If the only infinitesimal flexes correspond to the trivial
infinitesimal flexes arising from projective transformations, then the configuration
is said to be {\em infinitesimally rigid}. 

When defining the rigidity matrix, we assumed that the points of the configuration are finite and that the lines of the configuration do not go through the affine origin. A finite configuration of points and lines in the real projective plane is always projectively equivalent to a configuration with finite points and lines that do not go though the affine origin, so this assumption does not pose any restriction.

\section{Infinitesimal projective motions}

\label{trivial}

\subsection{Trivial infinitesimal motions}
The projective group of symmetries of the plane acts on the homogeneous points by the general linear group of $3 \times 3$ invertible matrices modulo the matrices $k\textup{Id}$, where $\textup{Id}$ is the $3 \times 3$ identity matrix. Given an invertible matrix $A$, the corresponding action
on the lines is by the inverse transpose of $A$.

The point group is generated by the nine types of $3\times 3$ elementary matrices, and we may find a basis for the space of
infinitesimal trivial motions by considering, for each type, a one-parameter family of them and take the derivative at the identity matrix.

\subsubsection{Dilations}
First, suppose we have one of the diagonal elementary matrices,
$$     \left[ \begin{array}{ccc}  1+t & 0 & 0 \\ 0 & 1 & 0 \\ 0 & 0 & 1  \end{array} \right],
\left[ \begin{array}{ccc}  1 & 0 & 0 \\ 0 & 1+t & 0 \\ 0 & 0 & 1  \end{array}\right],
\left[ \begin{array}{ccc}  1 & 0 & 0 \\ 0 & 1 & 0 \\ 0 & 0 & 1+t  \end{array} \right], $$
which correspond to a dilation by a factor $1+t$. When $t = 0$, the transformation is the identity.
The corresponding infinitesimal action is $$ \left.\frac{d}{dt}\right|_{t=0} \left[ \begin{array}{ccc}  1+t & 0 & 0 \\ 0 & 1 & 0 \\ 0 & 0 & 1  \end{array} \right]
= \left[ \begin{array}{ccc}  1 & 0 & 0 \\ 0 & 0 & 0 \\ 0 & 0 & 0  \end{array} \right],
     \left.\frac{d}{dt}\right|_{t=0} \left[ \begin{array}{ccc}  1/(1+t) & 0 & 0 \\ 0 & 1 & 0 \\ 0 & 0 & 1  \end{array} \right]
   =  \left[ \begin{array}{ccc}  -1 & 0 & 0 \\ 0 & 0 & 0 \\ 0 & 0 & 0  \end{array} \right] $$
giving an infinitesimal dilation in the $x$ direction for the points, and a complementary one for the lines.  See Figure~\ref{dilateshearfig}a.
For the second matrix we get the $y$-dilations:
$$
\left[x_1,0,x_2,0,\ldots ; -a_1,0, -a_2,0,  \ldots  \right],
 \ 
 \left[0,y_1,0,y_2,\ldots ; 0, -b_1,0, -b_2, \ldots  \right]
 $$
\begin{figure}[htb]
\centering
a)\includegraphics[width=.43\textwidth]{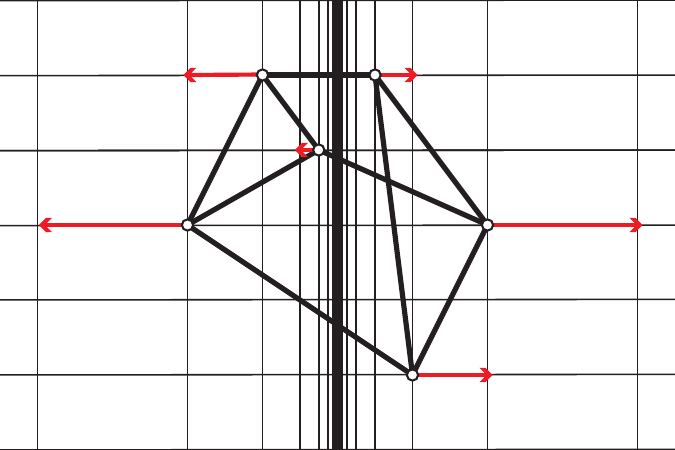}
\
b)\includegraphics[width=.43\textwidth]{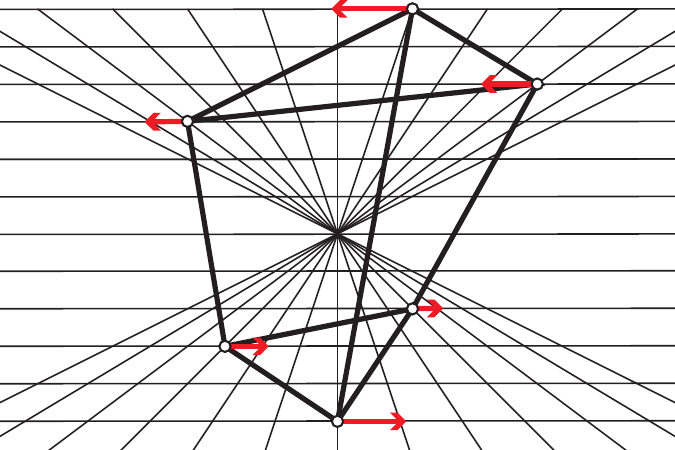}

\caption{a) An infinitesimal $x$-dilation, b) An infinitesimal $x$-shear\label{dilateshearfig}}
\end{figure}

\begin{remark}
The reciprocal point $(-a/(a^2 + b^2): -b/(a^2 + b^2):1)$ of the line $(a:b:1)$
is dilated to $ (-at/(a^2 + b^2): -b/(a^2 + b^2):1)$, which is on the dilated line, as required, but it is not its reciprocal point. The
reciprocal point of the dilated line
is $(\frac{-at}{a^2 + b^2t^2}: \frac{-bt^2}{a^2 + b^2t^2}:1)$.
\end{remark}

The third matrix is more complicated since it does not preserve $z = 1$. 
However, since the sum of the three diagonal elementary matrices is a pure dilation at the origin, which is the identity in projective space. So the other
$8$ elementary matrices generate the $8$ dimensional space
of trivial infinitesimal projective motions. Therefore, we need not consider the infinitesimal action of the third matrix here.


\subsubsection{Shears}
The elementary matrix 
$$\left[ \begin{array}{ccc}  1 & 0 & 0 \\ t & 1 & 0 \\ 0 & 0 & 1  \end{array} \right]$$
and its inverse transpose
$$\left[ \begin{array}{ccc}  1 & -t & 0 \\ 0 & 1 & 0 \\ 0 & 0 & 1  \end{array}\right] $$
act on the points and lines respectively by shearing the point in the $y$ direction, and shearing the line in the negative $x$ direction.

The corresponding infinitesimal action
$$ \left.\frac{d}{dt}\right|_{t=0} \left[ \begin{array}{ccc}  1 & 0 & 0 \\ t & 1 & 0 \\ 0 & 0 & 1  \end{array} \right]
=\left[ \begin{array}{ccc}  0 & 0 & 0 \\ 1 & 0 & 0 \\ 0 & 0 & 0  \end{array} \right],
 \quad
   \left.\frac{d}{dt}\right|_{t=0}  \left[ \begin{array}{rrr}  1 & -t & 0 \\ 0 & 1 & 0 \\ 0 & 0 & 1  \end{array} \right] =
 \left[ \begin{array}{rrr}  0 & -1 & 0 \\ 0 & 0 & 0 \\ 0 & 0 & 0  \end{array} \right] $$
is infinitesimal action in which the point is sheared in the $y$ direction, while the line is sheared in the
negative $x$ direction, giving $\left[0,x_1,0,x_2,\ldots ; -b_1,0, -b_2,0  \ldots  \right]$ for our configuration.

The infinitesimal action of the elementary matrix 
$$\left[ \begin{array}{ccc}  1 & t & 0 \\ 0 & 1 & 0 \\ 0 & 0 & 1  \end{array} \right]$$
and its inverse transpose
$$\left[ \begin{array}{ccc}  1 & 0 & 0 \\ -t & 1 & 0 \\ 0 & 0 & 1  \end{array}\right] $$
gives the infinitesimal $x$-shears of the points,
 $$\left[y_1,0,y_2,0,\ldots ; 0,-a_1,0, -a_2,  \ldots  \right].$$
See Figure~\ref{dilateshearfig}b.

\subsubsection{Translations and rotations at infinity}
Next we consider the elementary matrix 
$$\left[ \begin{array}{ccc}  1 & 0 & t \\ 0 & 1 & 0 \\ 0 & 0 & 1  \end{array} \right]$$ and its inverse transpose 
$$\left[ \begin{array}{ccc}  1 & 0 & 0 \\ 0 & 1 & 0 \\ -t & 0 & 1  \end{array} \right]$$
which act as a translation in the direction of the $x$-axis.
The infinitesimal action becomes
$$ \left.\frac{d}{dt}\right|_{t=0} \left[ \begin{array}{ccc}  1 & 0 & t \\ 0 & 1 & 0 \\ 0 & 0 & 1  \end{array} \right]
= \left[ \begin{array}{ccc}  0 & 0 & 1 \\ 0 & 0 & 0 \\ 0 & 0 & 0  \end{array} \right],
     \left.\frac{d}{dt}\right|_{t=0} \left[ \begin{array}{ccc}  1 & 0 & 0 \\ 0 & 1 & 0 \\ -t & 0 & 1  \end{array} \right]
   =  \left[ \begin{array}{ccc}  0 & 0 & 0 \\ 0 & 0 & 0 \\ -1 & 0 & 0  \end{array} \right]$$
The first matrix translates each point in the $x$ direction and preserves the plane $z=1$, but the second matrix does not. The second matrix sends the line $(a:b:1)$ to the line $(a:b:1-at)$.  The projection to $z=1$ is $(a/(1-at): b/(1-at): 1)$, and the derivative at $t=0$ is $(-a^2, -ab)$.
This gives the infinitesimal point translations in the $x$-direction. Translation in the $y$-direction corresponds to the elementary matrix
with $A_{23} = t$.
$$
\left[1,0,1,0,\ldots ; a_1^2,a_1b_1, a_2^2,a_2b_2, \ldots  \right],
\ 
\left[0,1,0,1,\ldots ; a_1b_1, b_1^2,a_2b_2, b_2^2, \ldots  \right].
$$

Next we consider

the two matrices which give the action on the points by
$$ \left[ \begin{array}{ccc}  1 & 0 & 0 \\ 0 & 1 & 0 \\ t & 0 & 1  \end{array} \right],
     \left[ \begin{array}{ccc}  1 & 0 & 0 \\ 0 & 1 & 0 \\ 0 & t & 1  \end{array} \right] $$
which simply give the dual of the case just considered.
The previous transformations on the points all preserved the line at infinity,
however these last two move beyond that.
Here the action on the line coordinates is an infinitesimal translation in the
$x$ and $y$ directions, respectively, 
so we know immediately from the previous calculation what the infinitesimal
translations must be:
$$
\left[-x_1^2, -x_1y_1, -x_2^2, -x_2y_2, \ldots ; 1, 0, 1, 0,   \ldots  \right],
\left[-x_1y_1, -y_1^2, -x_2y_2, -y_2^2,  \ldots ; 0, 1, 0, 1,    \ldots  \right],
$$
It acts on the points by an infinitesimal rotation at infinity, see Figure~\ref{infrotfig}.
\begin{figure}[htb]
\centering
\includegraphics[width=.75\textwidth]{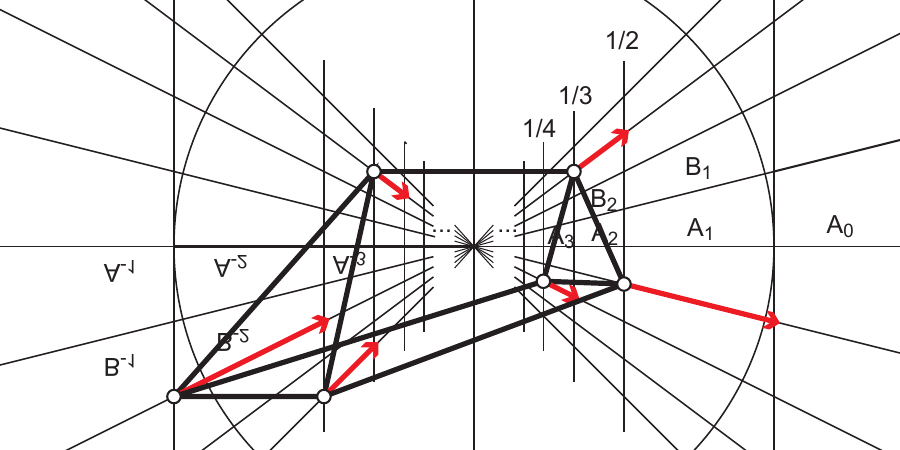}
\caption{An infinitesimal rotation of the picture plane about an axis in the picture plane.\label{infrotfig}}
\end{figure}
\subsection{Geometric interpretation of an infinitesimal motion.}

An infinitesimal motion of a projective configuration $(S,\mathbf{p},\mathbf{l})$, with  $S=(P,L,I)$,  is an assignment of a vector $\Delta  \mathbf{p}$ to each point $p\in P$ and a vector $\Delta \mathbf{l}$ to each line $l\in L$, such that
$\mathbf{p} \cdot \Delta \mathbf{l} + \mathbf{l} \cdot \Delta  \mathbf{p} = 0$ whenever $p$ and $l$ are incident. 

Suppose that an incident point-line pair $(p,l)$ and an infinitesimal motion $\Delta \mathbf{l}$ of the realization $\mathbf{l}$ of the line $l$ are given. In Figure \ref{infinterp03fig}, the vector $\Delta \mathbf{l}$ is represented as a displacement of the vector of homogeneous coordinates $\mathbf{l}$ of the horizontal line $l$.  What are the allowable infinitesimal
displacements $\Delta \mathbf{p}$ of the realization $\mathbf{p}$ of $p$?

The component of $\Delta \mathbf{p}$ in the direction of the line $l$ is perpendicular to the vector $\mathbf{l}$, hence unconstrained by the incidence condition $\mathbf{p} \cdot \Delta \mathbf{l} + \mathbf{l} \cdot \Delta  \mathbf{p} = 0$. Consider now the effect
on the perpendicular component, $\Delta \mathbf{p}_\perp$.
First assume that $\Delta \mathbf{l}$ is not parallel to $\mathbf{l}$.
 Let $p_0$ be the point of intersection of the line $l$ and the line with normal $\Delta\mathbf{l}$ going through the origin of the affine plane.

Then $\mathbf{p}_0 \cdot \Delta \mathbf{l}=0$, so
the incidence condition becomes $\mathbf{l}  \cdot \Delta\mathbf{p}_0 = 0$ implying that $\Delta \mathbf{p}_{0\perp} = \mathbf{0}$.  Therefore $\Delta\mathbf{p}_{0\perp}$ is the infinitesimal motion of a rotation fixing the point $\mathbf{p}_0$.
  See Figure~\ref{infinterp03fig}.

\begin{figure}[htb]
\centering
\captionsetup{width=0.9\textwidth}
\includegraphics[width=.9\textwidth]{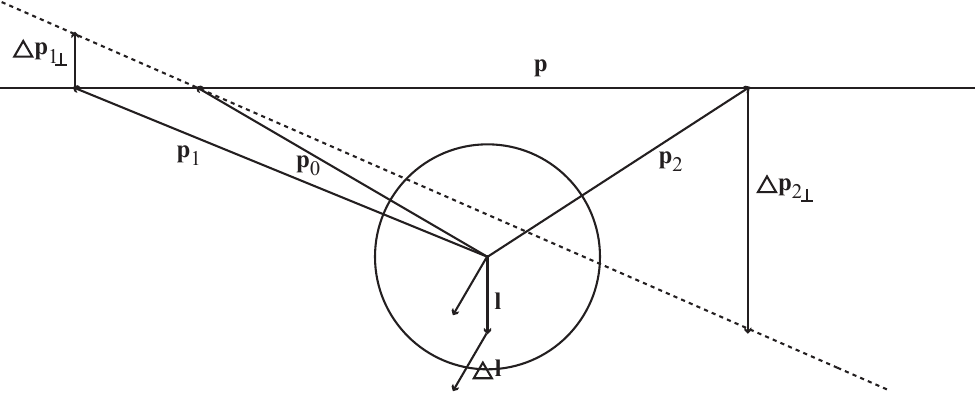}
\caption{Constructing infinitesimal displacements of points  $p_1,p_2$ on a moving line with the normal $\mathbf{l}$. \label{infinterp03fig}
}
\end{figure}

Setting $\mathbf{p} = \mathbf{p}_0 + (\mathbf{p} - \mathbf{p}_0)$, the constraint gives
$(\mathbf{p} - \mathbf{p}_0) \cdot \Delta \mathbf{l} + \mathbf{l} \cdot \Delta  \mathbf{p} = 0$, so
$$ | \Delta \mathbf{p}_{\perp}| =  |(\mathbf{p} - \mathbf{p}_0)||\Delta \mathbf{l}|/|\mathbf{l}|$$
and the magnitude of the perpendicular infinitesimal displacement of $\mathbf{p}$ is proportional to
the distance from the motion center, $\mathbf{p}_0$.
(It may be convenient in examples to compute the perpendicular displacement $\Delta \mathbf{p}_\perp$ at the
point of $l$ closest to the origin, $\Delta \mathbf{p}_\perp =  -\mathbf{l}\cdot \Delta \mathbf{l} /\mathbf{l}^2$
as a reference to construct the rest.) 

The infinitesimal motion of a point $\mathbf{p}$ on $\mathbf{l}$ is the sum of the component of $\Delta \mathbf{p}$ in the direction of $l$ with $\Delta \mathbf{p}_\perp $, the component perpendicular to $l$.
If $\Delta\mathbf{l}$ is parallel to $\mathbf{l}$ then $\Delta \mathbf{p}_{\perp} = \mathbf{0}$ for all points $p$ on the line, so that $\Delta\mathbf{p}$ is equal to $\Delta\mathbf{l}$.

\begin{example}\rm   The Desargues configuration  has a $3$-dimensional space of non-trivial infinitesimal motions. The generators illustrated in Figure~\ref{infintesimalexample07fig} 
   \begin{figure}[htb]
   \centering 

   a) \includegraphics[width=.32\textwidth]{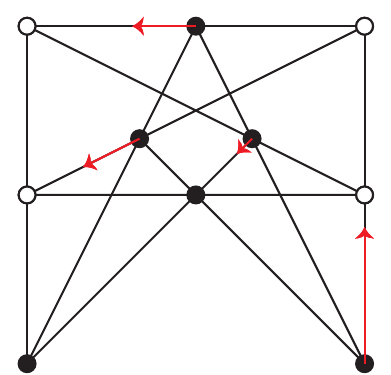}
   \quad 
   b)\includegraphics[width=.32\textwidth]{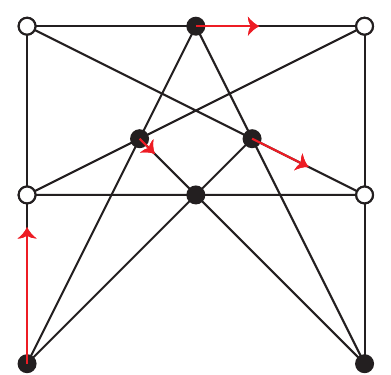}
   \caption{Generators of the space of infinitesimal motions of the Desargues configuration \label{infintesimalexample07fig}}
   \end{figure}
   and~\ref{infintesimalexample07figb}a
   can be found by first pinning the white vertices. With that choice of pins, the three lower black vertices each have only a
   one-dimensional space of movement possible.  Pinning two of the three of them and choosing an infinitesimal displacement for the third, allows one to solve uniquely for the other three non-zero displacements, as the reader may easily check.  It is also easy to see that fixing infinitesimally all three lower black vertices forces the remaining three to be fixed as well.  
   (In the figures, only the point displacements are indicated, since the line displacements can be inferred from these.) 

   Notice that these displacements are infinitesimal only, although each does roughly correspond to a finite motion,
   See Figure~\ref{infintesimalexample07figb}b. 
      \begin{figure}[htb]
   \centering 

   a) \includegraphics[width=.32\textwidth]{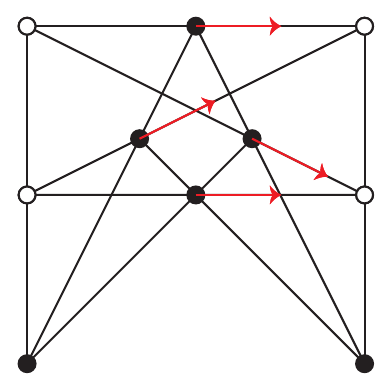}
   \quad 
   b)\includegraphics[width=.32\textwidth]{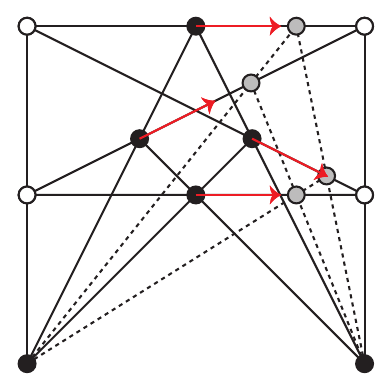}
   \caption{An infinitesimal motion versus a finite motion. \label{infintesimalexample07figb}}
   \end{figure}
   There only four of the non-pinned  infinitesimal displacements are realized in the finite motion, the other two being
   well short of the mark.
\end{example}

\subsection{Higher dimensional trivial motions}

So far we have looked at the $8$-dimensional space of trivial motions in dimension $d=2$ (the plane) -- the kernel of the infinitesimal rigidity matrix.

We can generalize these counts to all dimensions.

$d=1$ (the line):  The projective transformations are $2\times 2$ invertible matrices modulo the multiples
if the $2\times 2$ identity matrix.This gives dimension $4-1= 3$ for the trivial motions.  You can choose the location of up to
three points along the line provided they do not coincide.

$d=3$: The projective transformations are represented by $4\times 4$ invertible matrices, modulo multiples
of the $4\times 4 $ identity matrix, giving a dimension of $16-1 = 15 = 5\times 3$.  Every point is represented by $4$
homogeneous coordinates, or $3$ coordinates with the last coordinate 1.  The factoring $15=5\times 3$ tell
us that we can transform any 5 general position points (no 4 points coplanar).
The complete hypergraph of 5 pts and all possible triangles has the counts $|P|=5, |L|=10, |I|= 5 \times 6=30 = 3 \times 15 -15$.  The count suggests this is a maximal independent set of rows, or isostatic.

In  a general dimension $d$, following the same pattern as above, the projective transformations are represented by
 $(d+1)\times (d+1)$ matrices modulo multiples of the identity: $(d+1)^2-1=d^2-2d =  d(d+2)$.  Given that,
 modulo the multiples for homogeneous coordinates,  we have $d$ coordinates for each point,  $ d(d+2) $
 tells us that we can choose $d+2$ general position points to go onto $d+2 $ general position other points by a projective transformation.  Equivalently, we can choose $d+2$ hyperplanes to take to $d+2$ hyperplanes.

 The overall counts for a complete hypergraph in dimension $d$ on $d+2$ vertices is  $|P|=d+2, |L|= \binom{d+2}{2} , |I| = (d+2)\binom{d+1}{2}$.


\section{Pinning infinitesimal projective motions}

Suppose we have an incidence preserving motion of a projective configuration with at
least $4$ points, no three of which are collinear in a neighborhood of $0$: say $\mathbf{p}_0(t)$, $\mathbf{p}_1(t)$, $\mathbf{p}_2(t)$, $\mathbf{p}_3(t)$.
For all $t$ in a neighborhood of $0$, the point $\mathbf{p}_{0}(t)$ lies in the span of
$\{\mathbf{p}_{1}(t),\mathbf{p}_{2}(t),\mathbf{p}_{3}(t)\}$,
so let $\lambda_{i,t}$ be chosen so that $\mathbf{p}_0(t) = \lambda_{1,t} \mathbf{p}_1(t) + \lambda_{2,t} \mathbf{p}_2(t) + \lambda_{3,t} \mathbf{p}_3(t)$.
Define the matrix $A_t$ as the unique matrix sending $\lambda_{i,t} \mathbf{p}_{i,t}$ to
$\lambda_{i,0} \mathbf{p}_{i,0}$, for $i \in \{1,2,3\}$.  For $i \in \{1,2,3\}$, the matrix $A_t$ maps $\mathbf{p}_{i,t}$
to $\mathbf{p}_{i,0}$ up to scalar multiple, and maps
  $\mathbf{p}_{0,t}$
to $\mathbf{p}_{0,0}$ precisely. Now the composite motion defined by $A_t\mathbf{p}_{i,t}$ on all points, and by $(A_t^{-1})^T\mathbf{k}_{j,t}$
on the lines fixes the four given points.

\begin{theorem}
   Every motion of a projective configuration containing four points, no three of which are collinear,
   is equivalent to one in which those four points are pinned.

   Equivalently, we can pin at four lines, no three of which are coincident.

   We also can pin the three non-collinear points and a line not passing through any of them, or
   at three non-coincident lines and a point not on any of them.
\end{theorem}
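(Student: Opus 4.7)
The plan is to follow exactly the construction sketched in the paragraph preceding the theorem: build a smooth one-parameter family $A_t$ of projective transformations that undoes the motion of the four distinguished points, and then compose it with the given motion. The key input is the classical fact that four points in general position in $\mathbb{RP}^2$ form a projective frame: they determine a unique projective transformation sending them to any other such quadruple.

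Concretely, I would fix affine representatives $\mathbf{p}_i(t) \in \mathbb{R}^3$ for $i \in \{0,1,2,3\}$, and invoke the general-position hypothesis—valid in a neighborhood of $0$—to solve the $3\times 3$ linear system
\[
\mathbf{p}_0(t) = \lambda_{1,t}\mathbf{p}_1(t) + \lambda_{2,t}\mathbf{p}_2(t) + \lambda_{3,t}\mathbf{p}_3(t)
\]
for smooth scalars $\lambda_{i,t}$. Then $A_t$ is defined as the unique linear map sending the scaled basis $\lambda_{i,t}\mathbf{p}_i(t)$ to $\lambda_{i,0}\mathbf{p}_i(0)$; this carries $\mathbf{p}_0(t)$ to $\mathbf{p}_0(0)$ on the nose and each $\mathbf{p}_i(t)$ to $\mathbf{p}_i(0)$ up to scalar, which is trivial projectively. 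The composed motion $\tilde{\mathbf{p}}(t) = A_t\mathbf{p}(t)$, $\tilde{\mathbf{l}}(t) = (A_t^{-1})^T \mathbf{l}(t)$ then pins the four points while preserving all incidences (since projective transformations preserve the incidence relation), and is equivalent to the original by construction.

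The dual claim—pinning four lines, no three coincident—follows immediately by running the same argument in the dual projective plane, where the projective group acts on lines via $(A_t^{-1})^T$. For the mixed statements, I would verify that three non-collinear points together with a line $l$ not passing through any of them also form a projective frame, i.e.\ that their joint stabilizer in $\mathrm{PGL}(3)$ is trivial. In the basis given by the three points, the pointwise stabilizer is the diagonal torus $\mathrm{diag}(a,b,c)$; requiring this to also fix $l=(l_1:l_2:l_3)$ with all $l_i \neq 0$ forces $a=b=c$, i.e.\ the identity in $\mathrm{PGL}(3)$. Alternatively, one can intersect $l$ with a line through two of the three points to produce a fourth point no three collinear with any two of the others, reducing the mixed case to the four-point case. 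The remaining mixed statement is then dual.

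The main technical obstacle is ensuring that $A_t$ depends smoothly on $t$ with $A_0 = \mathrm{Id}$. Smoothness of $\lambda_{i,t}$ follows because the $3\times 3$ coefficient matrix of the system is nonsingular on a neighborhood of $0$ (by general position) and the right-hand side is smooth; hence $A_t$, assembled from these smooth data, is itself smooth, and at $t=0$ it is literally the identity. The scalar ambiguity inherent in homogeneous coordinates is absorbed harmlessly on passing to projective space, so the only genuine issue is the linear-algebraic smoothness check.
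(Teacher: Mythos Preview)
Your treatment of the four-point case is exactly the construction the paper places in the paragraph preceding the theorem, and your handling of the four-line case by duality matches the paper's one-line observation. The added smoothness discussion is more detailed than the paper's, but not a different argument.

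For the mixed case your primary argument differs from the paper's. You show directly that three non-collinear points together with a line avoiding them form a projective frame, by computing that the pointwise stabilizer of the three points is the diagonal torus and that fixing a line with all coordinates nonzero collapses it to the identity. The paper instead reduces geometrically to the four-point case: the three sides of the triangle on $p_1,p_2,p_3$ meet $l$ in points $p_{12},p_{23},p_{31}$, and the paper observes that, say, $p_1,p_2,p_{23},p_{31}$ are four points in general position whose pinning is equivalent to pinning $p_1,p_2,p_3,l$. Your stabilizer computation is cleaner algebra; the paper's reduction is more in the spirit of the surrounding Cayley-algebra constructions and makes the equivalence with four-point pinning explicit.

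Your \emph{alternative} reduction, however, does not work as stated. If you intersect $l$ with the line through two of the three points, say $p_1p_2$, the resulting point $q$ is collinear with $p_1$ and $p_2$ by construction, so $\{p_1,p_2,p_3,q\}$ fails the no-three-collinear hypothesis. To repair this you need two intersection points on $l$ coming from two different sides of the triangle, which is precisely the paper's construction. Since your stabilizer argument already suffices, this is a minor slip, but the alternative sentence should be corrected or dropped.
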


\begin{proof}
   The second pinning criterion is dual to the first, proved by the preceding argument.
   
   If you pin at three non-collinear points,
   and a line avoiding all of them, then the lines of the triangle through the pinned points, $p_1$, $p_2$, and $p_3$, meets that line at $p_{12}$, $p_{23}$ and $p_{31}$, forming a pinned
   complete quadrilateral, and that quadrilateral is equivalently pinned by four of the points,
   say $p_1$, $p_2$, $p_{31}$ and $p_{23}$.
\end{proof}
We say that a line is fixed by a projective transformation if the set of points on the line is fixed setwise, not necessarily pointwise. 
\begin{theorem}\label{pinningagain}
    Requiring that any two points and any two lines in projective space be fixed still leaves at least one projectivity preserving all four elements.
\end{theorem}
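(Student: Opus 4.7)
The plan is to show that any projectivity fixing the two points $p,q$ and the two lines $\ell,m$ automatically fixes the entire line $s=pq$ pointwise and the entire pencil of lines through $r=\ell\cap m$ linewise; such perspectivities (a \emph{homology} with center $r$ and axis $s$ when $r\notin s$, an \emph{elation} when $r\in s$) form a classical one-parameter subgroup of $PGL(3)$, which certainly contains non-identity elements.

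Concretely, I would carry out the following steps. Set $s=pq$ and $r=\ell\cap m$; a projectivity $A$ fixing $p,q$ must fix $s$ setwise, and fixing $\ell,m$ it must fix $r$. In the main case, where $p\ne q$, $\ell\ne m$, neither of $p,q$ lies on $\ell\cup m$, and $r\notin s$, the four points $p,\,q,\,a:=s\cap\ell,\,b:=s\cap m$ are pairwise distinct on $s$. Since a projectivity of $\mathbb{P}^1$ fixing three distinct points is the identity, $A|_s=\mathrm{id}$. Dually, on the pencil through $r$ the projectivity $A$ fixes the four distinct lines $\ell,\,m,\,pr,\,qr$, so $A$ fixes every line through $r$. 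Finally, I would identify this subgroup in coordinates: choosing $s$ as the line at infinity and $r$ as the origin, these $A$ are the matrices $\operatorname{diag}(\lambda,\lambda,1)$ modulo scalars, a one-parameter group, so a non-identity element $A\ne \mathrm{Id}$ stabilizing all four of $p,q,\ell,m$ exists, as claimed.

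For each degenerate configuration (some of $p,q,\ell,m$ coincide, an incidence such as $p\in\ell$ holds, or $r\in s$) the conclusion still holds, and if anything more easily: either there are genuinely fewer than $4$ distinct pinning objects and the stabilizer is manifestly larger, or the perspectivity becomes an elation (when $r\in s$), still a one-parameter group, or the pinning conditions become dependent on $PGL(3)$ and the stabilizer dimension only grows. The main obstacle I foresee is the bookkeeping of these degenerate cases; the slickest uniform justification is to note that the cross-ratio $[p,q;\,s\cap\ell,\,s\cap m]$ on $s$ is a non-trivial projective invariant of the configuration, so the moduli space has positive dimension and hence, by the orbit–stabilizer dimension formula in $\dim PGL(3)=8$, the stabilizer of every configuration has dimension at least $1$.
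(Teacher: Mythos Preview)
Your proposal is correct, and it takes a genuinely different route from the paper.

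The paper's proof is an affine normalization argument: it introduces the line $n_1=p\wedge q$ and an auxiliary fourth line $n_2$, sends the line joining $\ell\cap m$ and $n_1\cap n_2$ to infinity so that $\ell\parallel m$ and $n_1\parallel n_2$, shears the resulting parallelogram to a rectangle, and then observes that a one-axis Euclidean dilation along the direction perpendicular to $n_1$ fixes $p,q$ (pointwise on $n_1$) and $\ell,m$ (as vertical lines). Your argument is instead purely projective: with $r=\ell\cap m$ and $s=pq$, any homology with centre $r$ and axis $s$ automatically fixes $p,q$ (on the axis) and $\ell,m$ (through the centre), and these form a one-parameter subgroup. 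In fact the paper's dilation \emph{is} exactly such a homology, just written in its normalized Euclidean chart, so the two proofs exhibit the same projectivities by different means. Your approach is cleaner and coordinate-free, and as a bonus actually identifies the full stabilizer; the paper's is more elementary in that it never names homologies or elations. Both proofs wave hands equally at the degenerate cases (the paper simply assumes distinctness and non-incidence ``without loss of generality'').

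One remark on efficiency: your characterization of the stabilizer via the three-fixed-points argument on $s$ and on the pencil through $r$ is correct but unnecessary for the theorem. Since any homology with centre $r$ and axis $s$ visibly fixes $p,q,\ell,m$, you could simply write down $\operatorname{diag}(\lambda,\lambda,1)$ and be done; the containment of the stabilizer \emph{in} the homology group is extra. Your closing dimension-count via the cross-ratio invariant and semicontinuity of stabilizer dimension is a valid uniform argument for the degenerate cases, though it imports more machinery than the direct case-check would need.
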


\begin{proof}

   Let $p$ and $q$ be the points, and $l$ and $m$, be the lines.  Without loss of generality, assume they are
   all distinct, and that the points do not lie on the lines.
   Let $n_1 = p \wedge q$ be the
   line through $p$ and $q$, and $n_2$ be any fourth distinct line.  Form the complete quadrangle on these four lines, and let $r_1$ and $r_2$ be the points of
   intersection of $l$ and $m$ and $n_1$ and $n_2$ respectively.  Without loss of generality, assume that the line joining $r_1$ and $r_2$ is
   the line at infinity, that implies that $l$ and $m$ are parallel, and that $n_1$ and $n_2$ are also parallel.

   Again, using a shear, we can assume without loss of generality that the parallelogram is, in fact, a rectangle.  Now any dilation fixing the line
   $n_1 = p \wedge q$ and dilating in the perpendicular direction will fix $p$, $q$, $l$ and $m$.  So fixing those four elements leaves at least one
   projectivity.
\end{proof}

So two points and two lines never comprise a complete projective pinning system.

If the only pinned motion is the constant motion, then the configuration is rigid.
For example, the configuration of Figure~\ref{mrfourteenfig}
has many motions.  The white vertices can each independently move in a neighborhood and, given those, the lines and black
vertices may be computed.  To see the non-trivial degree of freedom, we may pin the vertices $a$, $b$, $c$, and $d$,
and conclude that the true degree of freedom is six.
\begin{figure}[htb]
\centering
a) \includegraphics[width=.4\textwidth]{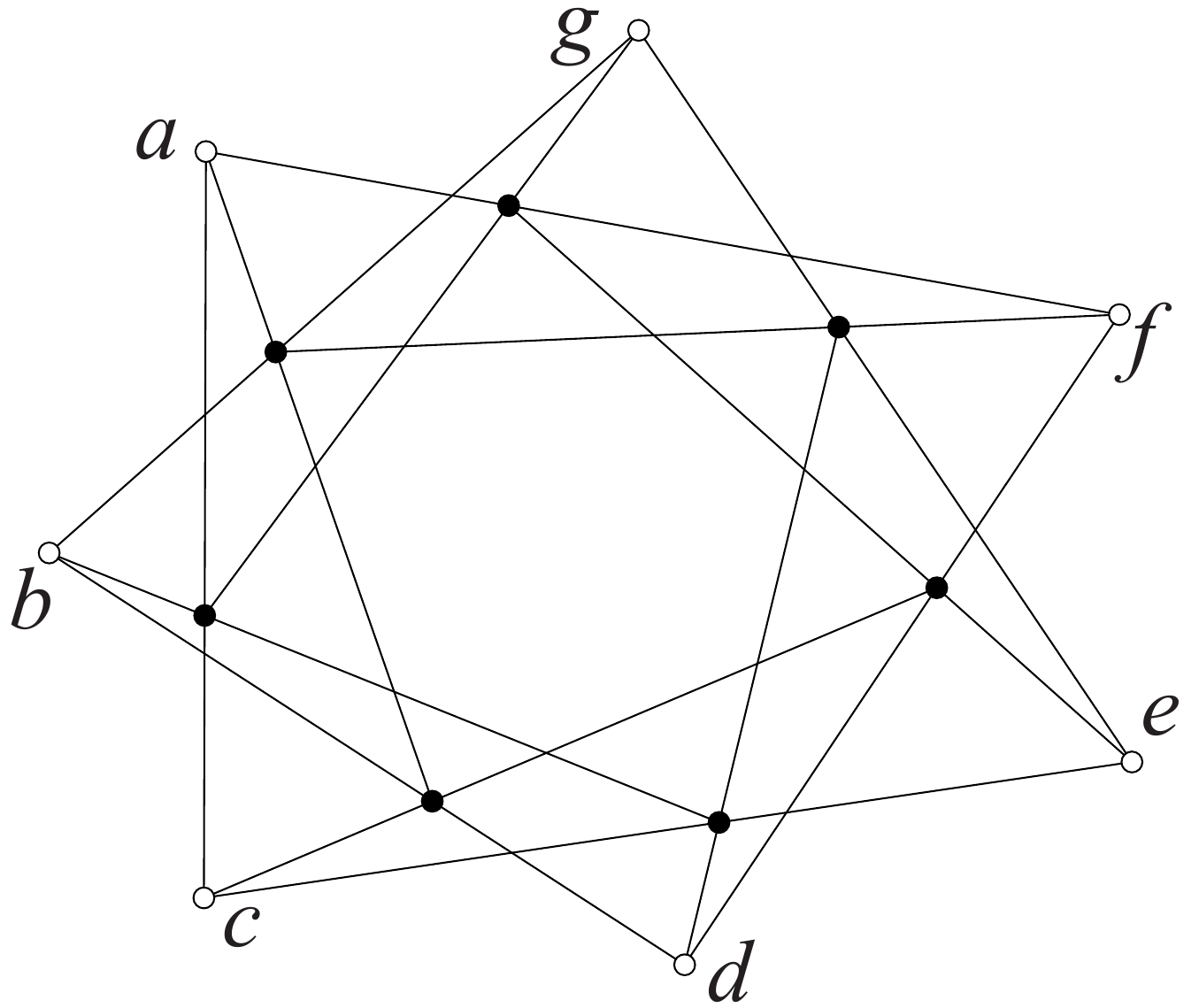}\quad
b)
\includegraphics[width=.4\textwidth]{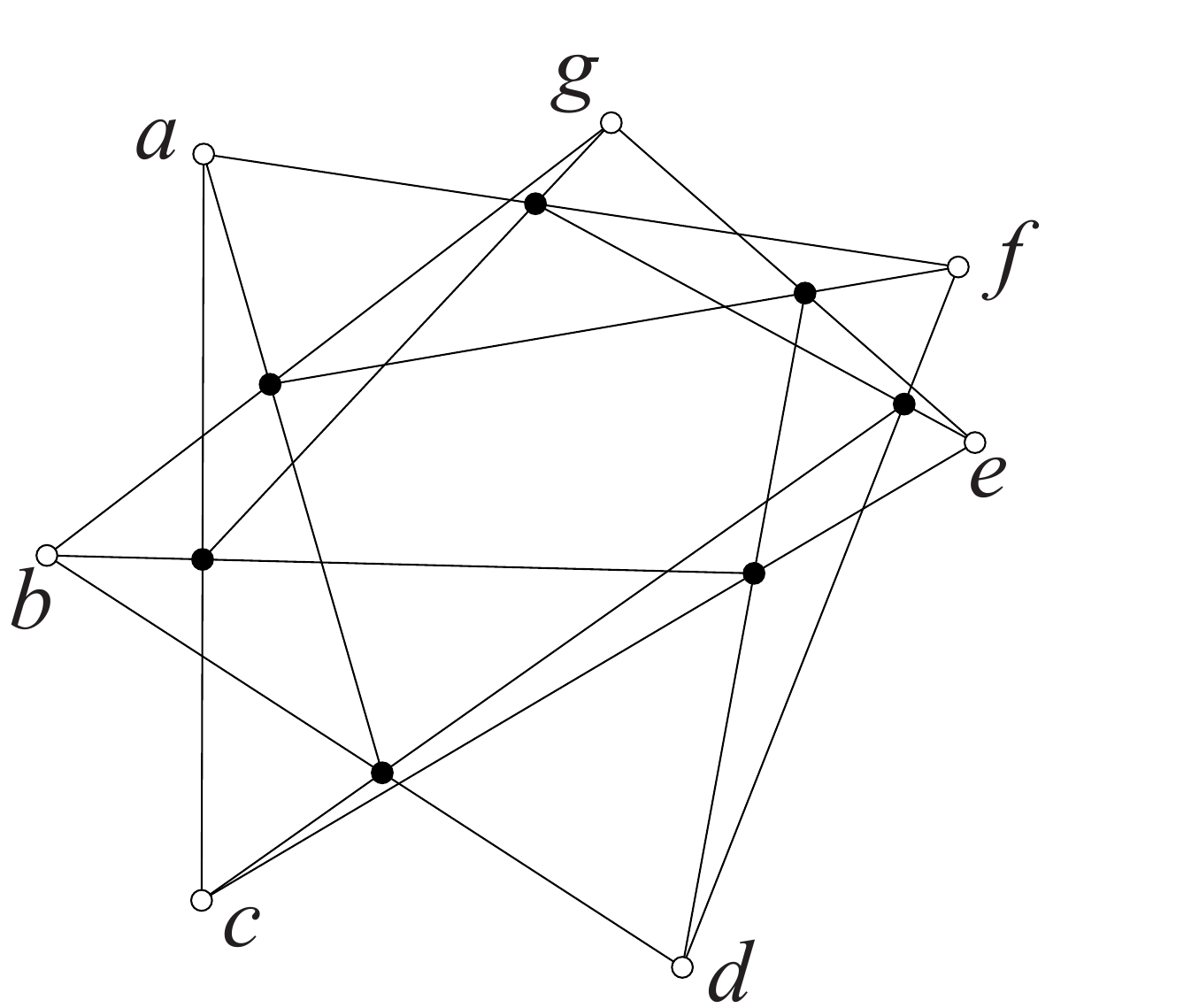}\caption{A movable configuration on 14 vertices and 14 lines constrained by 42 incidences \label{mrfourteenfig}}
\end{figure}
On the other hand, the complete quadrilateral with points $\mathbf{p}_0$, $\mathbf{p}_1$, $\mathbf{p}_2$ and $\mathbf{p}_3$
in general position  is rigid since we may pin those vertices, and take as a certificate of rigidity the Cayley algebra
expressions for the other two vertices:
$\mathbf{p}_5 = [\mathbf{p}_0\mathbf{p}_2][\mathbf{p}_1\mathbf{p}_3]$
and
$\mathbf{p}_6 = [\mathbf{p}_0\mathbf{p}_1][\mathbf{p}_2\mathbf{p}_3]$.

No Cayley algebra expression for the black vertices of the configuration of Figure~\ref{mrfourteenfig}, in terms of the seven
white vertices, is known.

\section{Relation between the different notions of rigidity}

\subsection{Infinitesimal rigidity implies rigidity}

  This exposition follows the method of Asimov and Roth~\cite{asimow1979rigidity}.
  Suppose we have points and lines of a projective configuration, with point $i$ and line $j$ incident
  if they satisfy the
  constraint
  $\mathbf{p}_i\cdot \mathbf{l}_j = -1$, and for simplicity move the configuration so that there are no points at
  infinity or lines through the origin. Suppose four of the points are pinned in general position,
  and we have a motion of points $\mathbf{p}_i(t)$ and
  lines $\mathbf{l}_j(t)$ with
  $\mathbf{p}_i(t)\cdot \mathbf{l}-j(t) = -1$ for all $t$ if $(i,j) \in I$ and $l$, and
    $\mathbf{p}_i(t) = \mathbf{p}_i(0)$ for all $t$ if point $i$ is pinned.

  If the configuration is rigid, then the only motion will be constant not only at the pins but at all
  points.
  In general, if there is a motion,  then the $n$'th derivative of the constraint gives
   $\sum_{k=0}^{n} {n \choose k} \mathbf{p}^{(k)}_i(t)\cdot \mathbf{l}^{(n-k)}_j(t) = 0$.
Evaluating at $0$ gives
   $\sum_{k=0}^{n} {n \choose k} \mathbf{p}^{(k)}_i(0)\cdot \mathbf{l}^{(n-k)}_j(0) = 0$,
   and for $k = 1$, this says
   \begin{equation} [\mathbf{p}^{(1)}_1(0), \ldots, \mathbf{l}^{(1)}_1(0), \ldots] \quad \forall \ (i,j) \in I
   \label{doweneediteq}
   \end{equation}
   is in the kernel of the rigidity matrix.
   For pinned frameworks, the condition for infinitesimal rigidity is that the only element of the kernel is the
   zero vector.

\begin{theorem}
   Suppose the pinned projective configuration is infinitesimally rigid, then for all
    $n> 0$,
   $\mathbf{p}_i^{(n)}(0) = \mathbf{0}$ for all points
   and
   $\mathbf{l}_j^{(n)}(0) = \mathbf{0}$ for all lines.
\end{theorem}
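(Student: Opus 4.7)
The plan is to prove this by strong induction on $n$, using the higher-order Leibniz expansion of the incidence constraints that is already written out in the paragraph preceding the theorem. The key observation is that if all derivatives of order strictly less than $n$ vanish, then the Leibniz expansion at the $n$-th derivative collapses to exactly the same bilinear expression as the first-order rigidity equation \eqref{Jacobian_eq}, so the tuple of $n$-th derivatives lies in the kernel of the projective rigidity matrix $M(S,\mathbf{p},\mathbf{l})$.

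For the base case $n=1$, the argument is essentially already given in the excerpt: differentiating $\mathbf{p}_i(t)\cdot\mathbf{l}_j(t)=-1$ once and evaluating at $0$ shows that $(\mathbf{p}_i^{(1)}(0),\mathbf{l}_j^{(1)}(0))$ satisfies \eqref{Jacobian_eq} for every incident pair. Together with the pinning condition $\mathbf{p}_i(t)=\mathbf{p}_i(0)$ on the four pinned points (which forces $\mathbf{p}_i^{(n)}(0)=\mathbf{0}$ for all $n\geq 1$ on those points), this puts the tuple of first derivatives in the kernel of the pinned rigidity matrix. Infinitesimal rigidity of the pinned configuration then forces the first derivatives of all points and lines to vanish.

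For the inductive step, I would assume $\mathbf{p}_i^{(k)}(0)=\mathbf{0}$ and $\mathbf{l}_j^{(k)}(0)=\mathbf{0}$ for all $1\leq k<n$ and all $i,j$. In the identity
\[
\sum_{k=0}^{n}\binom{n}{k}\mathbf{p}_i^{(k)}(0)\cdot\mathbf{l}_j^{(n-k)}(0)=0,
\]
every summand with $1\leq k\leq n-1$ vanishes by the inductive hypothesis, leaving only the $k=0$ and $k=n$ terms:
\[
\mathbf{p}_i(0)\cdot\mathbf{l}_j^{(n)}(0)+\mathbf{l}_j(0)\cdot\mathbf{p}_i^{(n)}(0)=0.
\]
This is precisely \eqref{Jacobian_eq} with $\Delta\mathbf{p}_i=\mathbf{p}_i^{(n)}(0)$ and $\Delta\mathbf{l}_j=\mathbf{l}_j^{(n)}(0)$, so the $n$-th derivative tuple lies in the kernel of $M(S,\mathbf{p},\mathbf{l})$. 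The pinning conditions again supply the boundary entries, and infinitesimal rigidity of the pinned configuration then forces this tuple to be zero.

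The main obstacle, such as it is, is purely notational: one must be careful that the pinning truly eliminates the $8$-dimensional space of trivial infinitesimal motions so that ``infinitesimal rigidity'' genuinely means ``trivial kernel of $M$ restricted to the pinned variables,'' and that the Leibniz expansion collapses cleanly at each stage. There is no analytic subtlety beyond smoothness of the motion, which is assumed, and no extra geometric input is needed because the argument is purely algebraic once the inductive set-up is in place.
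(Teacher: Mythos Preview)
Your proposal is correct and follows essentially the same argument as the paper: strong induction on $n$, with the Leibniz expansion of the $n$-th derivative of the incidence constraint collapsing (via the inductive hypothesis) to the first-order rigidity equation, so that the tuple of $n$-th derivatives lies in the kernel of the pinned rigidity matrix and must therefore vanish. Your write-up is in fact a bit more explicit than the paper's about the role of the pinning conditions and the base case, but the method is identical.
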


\begin{proof}
   The proof is by induction on $n$.  Infinitesimal rigidity gives the base case, so let
   $n>1$ and suppose
   $\mathbf{p}_i^{(m)}(0) = \mathbf{0}$ for all points
   and
   $\mathbf{l}_j^{(m)}(0) = \mathbf{0}$ for all lines for all $m < n$.
   The sum
   $\sum_{k=0}^{n} {n \choose k} \mathbf{p}^{(k)}_i(0)\cdot \mathbf{l}^{(n-k)}_j(0) = 0$
   has only two non-zero terms by the inductive hypothesis, reducing to the system
   $ \mathbf{p}_i(0)\cdot \mathbf{l}^{(n)}_j(0) + \mathbf{p}^{(n)}_i(0)\cdot \mathbf{l}^{(n)}_j(0)  = 0$ for
   for all incident pairs.  So the $n$'th derivatives are a solution to the pinned infinitesimal rigidity system
   for the framework, and so the $n$'th derivatives must all be zero.
\end{proof}

This shows that the system can have only constant analytic motions, hence only constant motions. See Chapter~3 of Milnor,~\cite{alma996612653408496}.

\begin{theorem}
  Every infinitesimally rigid projective framework is rigid.
\end{theorem}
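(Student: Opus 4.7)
The proof is essentially a packaging of the preceding theorems together with the Milnor reference. Let $(\mathbf{p}(t), \mathbf{l}(t))$ be any smooth motion in the realization space $V(S)$ with $(\mathbf{p}(0), \mathbf{l}(0))$ the given infinitesimally rigid realization. I want to show that for $t$ in a neighborhood of $0$ the pair $(\mathbf{p}(t), \mathbf{l}(t))$ lies in the projective orbit of $(\mathbf{p}(0), \mathbf{l}(0))$, which is exactly the definition of rigidity.

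First I would reduce to a pinned motion. Pick four points of the configuration in general position and apply the construction from the earlier pinning theorem to produce a smoothly varying family $A_t$ of projective transformations so that the composite motion $(A_t \mathbf{p}(t),\ (A_t^{-1})^{T}\mathbf{l}(t))$ pins those four points for all $t$ near $0$. Because $A_t$ is built by rational formulas in the coordinates of four moving points that remain in general position by continuity, it is well-defined, smooth, and invertible on a neighborhood of $0$.

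Next I would upgrade smoothness to real analyticity, invoking the Asimow-Roth principle that the Milnor reference supplies. Since $V(S)$ is the real algebraic set cut out by the quadratic incidence equations, a smooth motion through a point of $V(S)$ can be replaced without loss of generality by a real analytic motion with the same initial point and initial derivative; the rational family $A_t$ inherits analyticity from its construction, so the pinned composite motion is real analytic too. Now apply the preceding theorem: infinitesimal rigidity forces every higher-order derivative of the pinned composite at $t = 0$ to vanish. A real analytic function whose Taylor series at $0$ is identically zero is constant in a neighborhood of $0$, so the pinned composite is constant there. Unwinding the pinning, $(\mathbf{p}(t), \mathbf{l}(t)) = (A_t^{-1}\mathbf{p}(0),\ A_t^{T}\mathbf{l}(0))$ near $t = 0$, so the motion lies entirely in the projective orbit of the initial configuration. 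This is exactly rigidity.

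The main obstacle is the analytic-path upgrade, since the algebraic and differential pieces—pinning and the induction forcing all derivatives to vanish—are already carried by the preceding theorems. The remaining input is the standard real-algebraic-geometry fact (Milnor, Chapter 3) that a smooth path in the realization variety may be replaced by an analytic one with the same first-order data; once this is in hand, the theorem follows purely by unwinding definitions.
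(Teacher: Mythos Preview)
Your proposal is correct and follows essentially the same route as the paper: pin four points, invoke the preceding theorem to kill all higher derivatives, and use the Milnor reference to pass from ``only constant analytic motions'' to ``only constant motions.'' One small wording point: you phrase the Milnor step as upgrading a \emph{given} smooth path to an analytic one with the same first-order data, but what the paper (and the Asimow--Roth argument) actually uses is the contrapositive---if the configuration were flexible, the real-algebraic structure of $V(S)$ would supply \emph{some} non-constant analytic pinned path, which the vanishing-derivatives theorem then rules out; matching the initial derivative of a particular smooth path is neither needed nor what Milnor provides.
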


So if the {\em pinned rigidity matrix}, where eight rows are added for the pins, has zero kernel,
the projective configuration must be rigid.  Equivalently, if the ordinary rigidity matrix
has a kernel of dimension exactly $8$, then the  configuration is rigid.

\subsection{Rigidity does not imply infinitesimal rigidity}

Infinitesimal rigidity is a convenient linear certificate of rigidity, however, we will show in this section that it is a strictly stronger requirement. To construct a rigid but not infinitesimally rigid configuration, we start with some inductive constructions. If $\mathbf{p}$ and $\mathbf{q}$ are the coordinates of two points (lines), then $[\mathbf{p}, \mathbf{q}]$ denotes the line containing the two points (the point of intersection of the two lines).

 \begin{theorem}\label{0exentiontheorem}
       Let $S=(P,L,I)$ be an independent configuration and suppose that $p_i$ and $p_{i'}$ are distinct points 
       with $[\mathbf{p}_i, \mathbf{p}_{i'}] \neq \mathbf{l}_j$ for any 
       line $l_j$.  Then adding a new line $l_j$ realized by $[\mathbf{p}_i, \mathbf{p}_{i'}] = \mathbf{l}_j$ and adding 
       incidences $(p_i,l_j)$ and $(p_{i'},l_j)$ yields an independent configuration.
       
       Dually, suppose that $l_j$ and $l_{j'}$ are distinct lines
       with $[\mathbf{l}_j, \mathbf{l}_{j'}] \neq \mathbf{p}_i$ for any
       point $p_i$.  Then adding a new point $p_i$ realized by $[\mathbf{l}_j, \mathbf{l}_{j'}] = \mathbf{p}_i$ and adding
       incidences $(p_i,l_j)$ and $(p_i,l_{j'})$ yields an independent configuration.
       
       If the original configuration was infinitesimally rigid, then so is the new one. 
  \end{theorem}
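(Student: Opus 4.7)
The plan is to show that adjoining the new line $l_j$ together with the two new incidences $(p_i,l_j)$ and $(p_{i'},l_j)$ enlarges the projective rigidity matrix by exactly two columns (the columns for $l_j$) and two rows (the new incidence rows), and that these two new rows are linearly independent from each other and from all old rows. The preservation of infinitesimal rigidity then follows by a rank/nullity count. The dual statement is handled by applying projective duality, which interchanges the roles of points and lines (and correspondingly the column and row structure of the rigidity matrix), so the argument transfers verbatim.

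For independence, I would focus on the block structure. The new line $l_j$ is incident only to $p_i$ and $p_{i'}$, so only the two new rows are nonzero in the two new columns. The sub-block of the new rows in the new columns is
\[
\begin{pmatrix} x_i & y_i \\ x_{i'} & y_{i'} \end{pmatrix}.
\]
Any row dependence in the enlarged matrix, restricted to the new columns, forces $\alpha(x_i,y_i)+\beta(x_{i'},y_{i'})=(0,0)$. The main small obstacle, and the key step, is verifying that this $2\times 2$ matrix is nonsingular. If its determinant vanished then $(x_i,y_i)$ and $(x_{i'},y_{i'})$ would be proportional, so $p_i,p_{i'}$ and the affine origin would be collinear, forcing $l_j=[\mathbf{p}_i,\mathbf{p}_{i'}]$ to pass through the origin. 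This contradicts the standing convention (established after the definition of the rigidity matrix) that no line of the configuration passes through the origin; should the convention fail for the newly added line, one may first apply a projective transformation, under which independence and infinitesimal rigidity are both invariant, to restore it. With $\alpha=\beta=0$, the assumed dependence reduces to one among the old rows, which must be trivial by the independence of $S$.

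For the infinitesimal rigidity claim, I would argue by dimension. The enlarged matrix has $2(|P|+|L|+1)$ columns and rank $|I|+2$ by the independence just established; the kernel dimension is therefore unchanged. On the other hand, the $8$-dimensional space of trivial projective motions for $S$ extends canonically to the enlarged configuration: a one-parameter family of projective transformations acts on the whole plane, in particular on the new line $l_j=[\mathbf{p}_i,\mathbf{p}_{i'}]$, and the resulting velocities automatically satisfy the two new incidence equations (since both incidences are preserved by the group action). These extended trivial motions thus lie in the new kernel. Under the hypothesis that $S$ was infinitesimally rigid, the new kernel and the trivial motion space both have dimension $8$ and one contains the other, so they coincide, proving the extension is infinitesimally rigid.

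Finally, for the dual half of the theorem, I would not repeat the analysis but invoke the projective duality described in the preliminaries: it sends the configuration $(S,\mathbf{p},\mathbf{l})$ to its dual $(S^\ast,\mathbf{l},\mathbf{p})$, exchanging points with lines and preserving incidences (and hence exchanging the blocks of the rigidity matrix). The primal statement applied to the dual configuration is exactly the dual statement, and independence and infinitesimal rigidity are preserved under the duality. Thus the truly nontrivial content of the theorem is concentrated in the $2\times 2$ nonsingularity argument of the second paragraph, which is the only step that interacts nontrivially with the affine normalization.
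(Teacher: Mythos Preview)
Your proposal is correct and follows essentially the same approach as the paper: both argue that the two new columns are supported only on the two new rows, that the $2\times2$ block $\begin{pmatrix} x_i & y_i \\ x_{i'} & y_{i'}\end{pmatrix}$ is nonsingular (the paper phrases this as ``$\mathbf{p}_i$ and $\mathbf{p}_{i'}$ are linearly independent'' after moving the configuration so that no relevant line passes through the origin), and then reduce any row dependence to one on the old rows. Your treatment of infinitesimal rigidity via the kernel dimension and extension of trivial motions is slightly more explicit than the paper's one-line rank count $|I|+2 = 2|P|+2(|L|+1)-8$, but the content is the same, and both invoke duality for the second statement.
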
     
     
     \begin{proof}
        We need to show independence.  By the homogeneity of the projective plane, we may assume that the points are all finite, and that the line through any pair of points does not pass through the origin, so that the matrix is defined. We may also assume that $\mathbf{p}_i$ and $\mathbf{p}_{i'}$ are linearly independent. The only
        non-zero entries of the two columns associated to the line $l_j$ are in the rows for the incidences $(p_i,l_j)$ and $(p_{i'},l_j)$, and since we assumed that $\mathbf{p}_i$ and $\mathbf{p}_{i'}$ are linearly independent, any row dependence must be zero on those rows. By assumption there is no row dependence of the remaining rows, so the configuration must be independent.  
        
        The second claim follows by duality, and infinitesimal rigidity follows from the rank of the new matrix, which is $|I| + 2$, and $|I| + 2 = 2|P| + 2(|L| + 1) - 8$.
     \end{proof}    

A \textit{dyadic rational} is a rational number whose denominator is a power of $2$. In the next theorem, we construct isostatic grids with points whose coordinates are dyadic rationals.

\begin{theorem}
   For each $N \geq 0$ there exists an independent, infinitesimally rigid
   configuration $S=(P, L, I)$
   of points and lines such
   that every geometric point/line incidence between $\mathbf{p}_i$ and $\mathbf{l}_j$ in the plane is recorded as an incidence $(i,j) \in I$, and such
   that all points with coordinates $(n/2^N: m/2^N: 1)$, for integers $n$, $m$ satisfying  $0 \leq n, m \leq 2^N$,
   is either a point of
   the configuration or is the point of intersection of at least two lines.
\end{theorem}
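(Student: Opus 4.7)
We argue by induction on $N$, applying Theorem~\ref{0exentiontheorem} at every step so each intermediate configuration is isostatic. We strengthen the inductive hypothesis: $S_N$ contains, in addition to all dyadic level-$N$ points (as configuration points or as intersections of at least two lines), the two axis-direction points at infinity $P_\infty=(1{:}0{:}0)$ and $Q_\infty=(0{:}1{:}0)$, all axis-parallel lines $x=n/2^N$ and $y=m/2^N$, and both diagonals of every level-$N$ unit cell. For the base case $N=0$, the complete quadrangle on the four corners of the unit square is isostatic (Section~2.1); adjoining $P_\infty$ as the intersection of $y=0,y=1$ and $Q_\infty$ as the intersection of $x=0,x=1$, each via a 0-extension of the second kind, yields $S_0$.

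For the inductive step we build $S_{N+1}$ from $S_N$ via an ordered sequence of 0-extensions. \emph{Phase 1:} promote every level-$N$ line-intersection grid point that will serve as an endpoint of a new diagonal to a configuration point via a 0-extension of the second kind; each such edge-midpoint lies on exactly two configuration lines of $S_N$, so this is valid. \emph{Phase 2:} for every level-$(N+1)$ sub-cell, add its two diagonals via 0-extensions of the first kind through the two diagonally opposite corners. \emph{Phase 3:} promote \emph{only} those new level-$(N+1)$ cell centers that lie on the main diagonal $y=x$ of the unit square to configuration points, each at the intersection of the corresponding two cell diagonals (0-extension of the second kind). \emph{Phase 4:} for every promoted center, adjoin the vertical line through it and $Q_\infty$, and the horizontal line through it and $P_\infty$, each as a 0-extension of the first kind. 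All remaining level-$(N+1)$ grid points (the unpromoted cell centers and the new edge midpoints) are then automatically intersections of at least two configuration lines and therefore satisfy the theorem's condition without further promotion.

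\textbf{Main obstacle.} The core difficulty is to ensure that each step is a \emph{valid} 0-extension: a new line must pass through precisely the two prescribed configuration points, and a new point must lie on precisely the two prescribed configuration lines. Naively promoting all new cell centers to configuration points fails---two promoted centers sharing an $x$- or $y$-coordinate would force a third incidence on the new axis-parallel line introduced in Phase~4, producing a row dependence (the $3$-fold balance characterising self-stresses that is highlighted in the abstract). The fix is the choice in Phase~3 to promote only the centers along $y=x$, so that no two promoted centers share an axis coordinate; each new axis-parallel line of Phase~4 then passes through exactly one promoted center and one point at infinity. A finite check using the dyadic structure of the coordinates confirms that the promotions of Phase~1, the diagonal additions of Phase~2, and the promotions of Phase~3 likewise introduce no unexpected incidences. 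Granted this ordering, Theorem~\ref{0exentiontheorem} applied at each step yields an independent, infinitesimally rigid $S_{N+1}$ preserving the strengthened invariants, closing the induction.
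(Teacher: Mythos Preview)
Your overall strategy—induction via repeated applications of Theorem~\ref{0exentiontheorem}—matches the paper's, but the specific phases you describe do not actually work, and the hand-wave ``a finite check confirms no unexpected incidences'' hides a real obstruction.

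\textbf{Phase~1 already fails at $N=0$.} Your $S_0$ consists of the complete quadrangle on the unit square together with $P_\infty,Q_\infty$; its six lines are $x=0$, $x=1$, $y=0$, $y=1$, $y=x$, $y=1-x$. The edge midpoint $(1/2,0)$ lies on exactly \emph{one} of these lines, namely $y=0$, so it cannot be promoted as the intersection of two configuration lines. The same holds for the other three edge midpoints. Consequently Phase~2 cannot even start: the corners of your level-$1$ sub-cells are not all configuration points.

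\textbf{Phase~2/3 create unrecorded incidences.} Even granting some repair of Phase~1, adding \emph{all} slope-$\pm1$ sub-cell diagonals is incompatible with the requirement that every geometric incidence be recorded in $I$. For instance, half of the sub-cell diagonals lie on lines already present (e.g.\ the diagonal of $[0,\tfrac12]^2$ with slope $+1$ is the existing line $y=x$), so they are not valid $0$-extensions. More seriously, once the edge midpoints are promoted and the new anti-diagonals are added, the point $(1/2,0)$ lies on at least three configuration lines ($y=0$, $y=x-\tfrac12$, $y=-x+\tfrac12$), and any promoted center such as $(1/2,1/2)$ lies on both old diagonals $y=x$ and $y=1-x$ \emph{and} on new sub-cell diagonals. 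A $0$-extension records only two incidences, so the third is either missing (violating the ``every incidence recorded'' clause) or must be added separately, producing a row dependence. Your restriction in Phase~3 to centers on $y=x$ controls only collisions among the axis-parallel lines of Phase~4; it does nothing for the many slope-$\pm1$ collinearities generated in Phase~2.

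The paper avoids exactly this trap by working with the two \emph{diagonal} direction points $p_3=(1{:}1{:}0)$, $p_4=(1{:}-1{:}0)$ rather than cell diagonals, and by routing the construction through auxiliary points $e,f$ \emph{outside} the unit square so that each new horizontal or vertical bisector is introduced through exactly two existing configuration points with no accidental third incidence. The recursion is driven along the two vertical sides $ab$, $cd$, maintaining an invariant that level-$N$ heights are points and level-$(N{+}1)$ heights are mere crossings—this bookkeeping is what makes the ``all incidences recorded'' clause go through, and it is the piece your plan is missing.
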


\begin{proof}
   We construct an example for each $N$ recursively.

   At the $0$-th level, first take the four points of the unit square
   $a = (0:0:1)$,
$b = (0:1:1)$,
$c = (1:1:1)$, and
$d = (1:0:1)$.
Then form the complete quadrilateral by constructing the lines $ab$, $ac$, $bc$ and $cd$ and the intersection $p_1=(1:0:0)$ of the lines $ad$ and $bc$, and the intersection $p_2=(0:1:0)$ of the lines $ab$ and $cd$. Note that the complete quadrangle is independent, and infinitesimally rigid.

Then add the diagonal lines, $ac$ and $bd$, and their
intersections $p_3=(1:1:0)$ and $p_4=(1:-1:0)$ with the line through the points $p_1$ and $p_2$, which is the line at infinity.
Then construct the lines $ap_4$, $bp_3$, $cp_4$ and $dp_3$. Then construct the points $e$ and $f$ as the intersections of the lines $ap_4$ and $bp_3$ and the lines $bp_3$ and $cp_4$ respectively. 

Finally, construct the horizontal line through $e$, which is the line $ep_1$, and the vertical line through $f$, which is the line $fp_2$.
See Figure~\ref{gridisostactic05wx}a.
\begin{figure}[htb]
\centering
a)
\includegraphics[scale=.375]{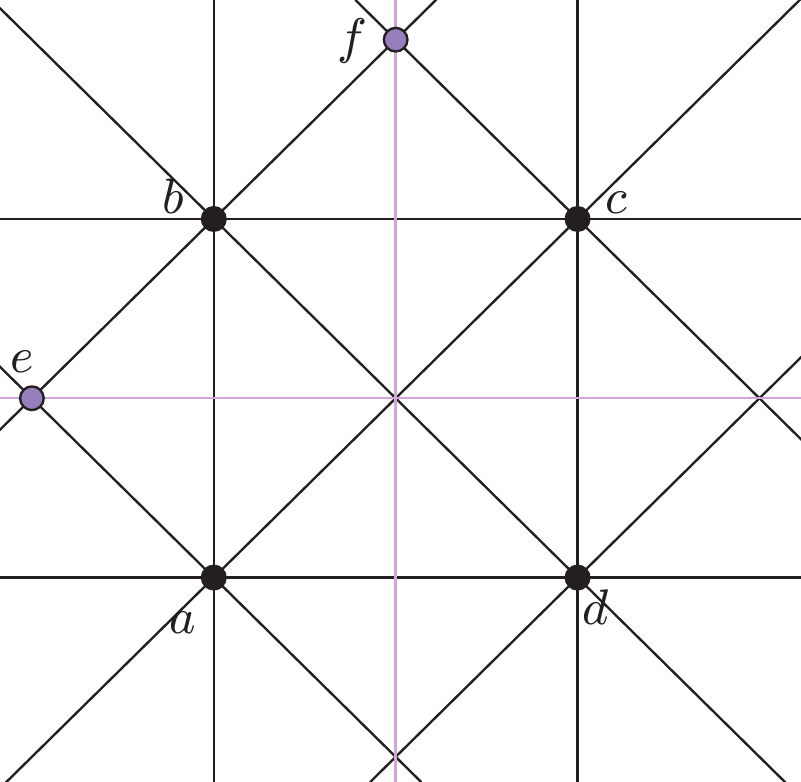}
\quad
b)
\includegraphics[scale=.375]{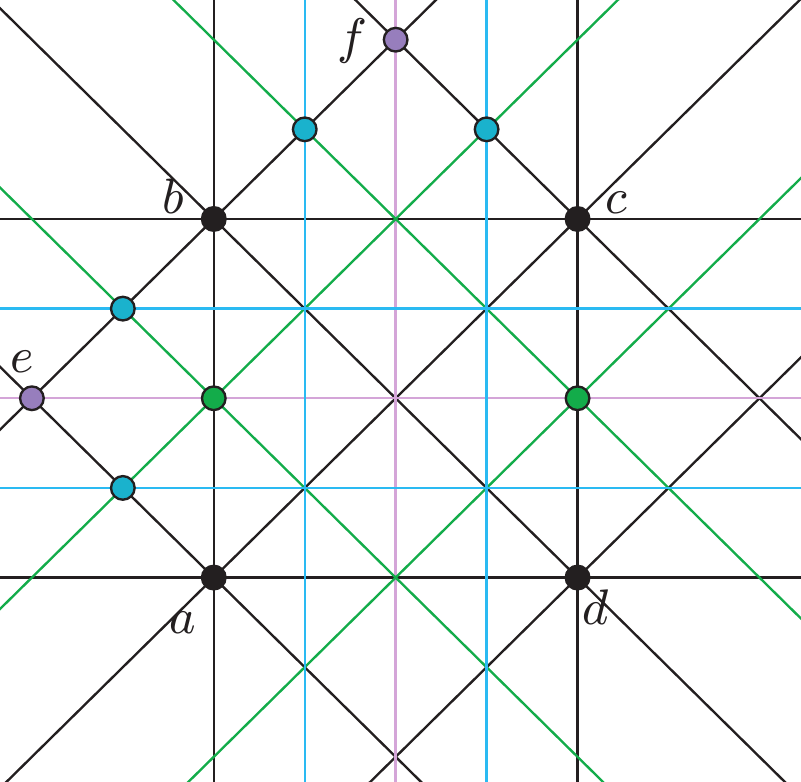}
\caption{Isostatic dyadic grids for $N=0$ and $N=1$. \label{gridisostactic05wx}}
\end{figure}
This completes the $0$-th level, and yields an independent and infinitesimally rigid configuration by Theorem~\ref{0exentiontheorem}.

Notice for the purpose of the recursion that the segments $ab$ and $cd$ have points with dyadic coordinates at level $N$ and crossings, by single lines, exactly at heights $(2k+1)/2^{N+1}$, with
$0 \leq k \leq 2^{N}-1$.
Notice also that segments $ae$, $eb$, $bf$ and $fc$ have points exactly those with dyadic coordinates at level $N+1$, and are only crossed by lines at those points.

For the recursive step start by the points of intersection of $ab$ and $cd$ with the horizontal lines of the configuration. Add the lines between each of the new points and $p_3$ and $p_4$.
Each such new line bisects one of the segments along
$ae$, $eb$, $bf$ and $fc$.  Add points at each of these intersections.  Finally, add horizontal lines
through the new points along
$ae$ and $eb$ and vertical lines through the new points along $bf$ and $fc$.  These subdivide the grid,
with the points along $ab$ and $cd$ again alternating between points and pairs of crossed lines.

The dyadic grids at levels $N=0$ and $N=1$ are illustrated in Figures~\ref{gridisostactic05wx}b and~\ref{gridisostactic05wx}a.
Each level is created from the previous by intersections of two lines and drawing lines through two points, and so each level is independent and infinitesimally rigid by Theorem~\ref{0exentiontheorem}. 
\end{proof}

In the construction, the points at level $N$, i.e. the points with coordinates $(n/2^N: m/2^N: 1)$,  in the interior of $abcd$ occur at the crossings of four lines.
If a point is introduced there by the Cayley algebra, then that point will have two incidences recorded in the set $I$, and two implied geometric incidences.  Adding either of these geometric incidences to $I$ results in a loss of constraint independence, in other words,
the introduction of an equilibrium stress. This illustrates the connection between self-stresses, constraint independence and projective theorems: certain incidences are implied by projective theorems, and such implied incidences will be dependent of the remaining incidences. 

The points at the level $N+1$ inside $abcd$ which occur at the crossings of two lines are also the points of a
square lattice, and all of these points may be added to form a rigid and independent configuration.  
See Figure~\ref{gridisostactic05wx}b.

\begin{corollary}
There exists an independent, infinitesimally rigid
   configuration $S=(P, L, I)$ in which every geometric point/line
   incidence between $\mathbf{p}_i$ and $\mathbf{l}_j$ corresponds to an incidence in $(i,j) \in I$, and containing as many points of a square lattice as required.
   \label{grid}
\end{corollary}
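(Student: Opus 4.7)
The plan is to apply the preceding theorem at a sufficiently high recursion level and then augment the resulting configuration by promoting certain crossings to explicit points of the configuration. The key observation, already noted in the paragraph preceding this corollary, is that the level-$(N+1)$ lattice points inside $abcd$ which lie at the crossing of only two configuration lines themselves form a square lattice; these are precisely the points that can be safely added without destroying independence.

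First I would fix the density of the square lattice we wish to contain, and choose $N$ large enough that this sublattice of 2-line crossings at level $N+1$ has the desired density and extent. Denoting by $(P, L, I)$ the configuration produced by the previous theorem at level $N+1$, and by $p_1, \dots, p_k$ the interior 2-line crossings to be added, I would augment the configuration one point at a time via the dual direction of Theorem~\ref{0exentiontheorem}: each $p_\alpha$ is realized as the intersection $[\mathbf{l}_j, \mathbf{l}_{j'}]$ of two specific configuration lines, and by construction no existing point of $P$ lies there, so adding $p_\alpha$ together with the two incidences $(p_\alpha, l_j)$ and $(p_\alpha, l_{j'})$ preserves both independence and infinitesimal rigidity. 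After all $k$ additions, the resulting configuration remains independent and infinitesimally rigid, and by construction contains the specified square sublattice in its point set.

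Finally I would verify that every geometric incidence of the augmented configuration is still recorded in $I$. The original $(P, L, I)$ has this property by the theorem; each newly added $p_\alpha$ lies on precisely two lines of $L$, both of which are paired with $p_\alpha$ in the augmentation, so no geometric incidence goes unrecorded. The main obstacle, as I see it, is the combinatorial bookkeeping: one must extract from the recursive construction of the previous theorem that the interior level-$(N+1)$ lattice points split cleanly into 4-line crossings (those at level-$N$ coordinates, where adding a point would destroy independence) and 2-line crossings (points newly created as intersections of a diagonal with a horizontal or vertical line), and confirm that the 2-line set is itself a square lattice. Once that combinatorial fact is pinned down, the iterated application of Theorem~\ref{0exentiontheorem} is essentially automatic.
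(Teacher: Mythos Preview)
Your approach is essentially the paper's: take the isostatic dyadic grid from the preceding theorem and promote the interior two-line crossings to points via the dual $0$-extension of Theorem~\ref{0exentiontheorem}, checking that these crossings form a square lattice and that no unrecorded incidences are introduced. The only slip is an off-by-one in the indexing: you should take the configuration at level $N$ (not level $N+1$), since it is in the level-$N$ grid that the level-$(N+1)$ interior lattice points sit at crossings of exactly two lines; in the level-$(N+1)$ grid those same points would already be four-line crossings.
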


\begin{figure}[htb]
\centering
a)
\includegraphics[scale=.375]{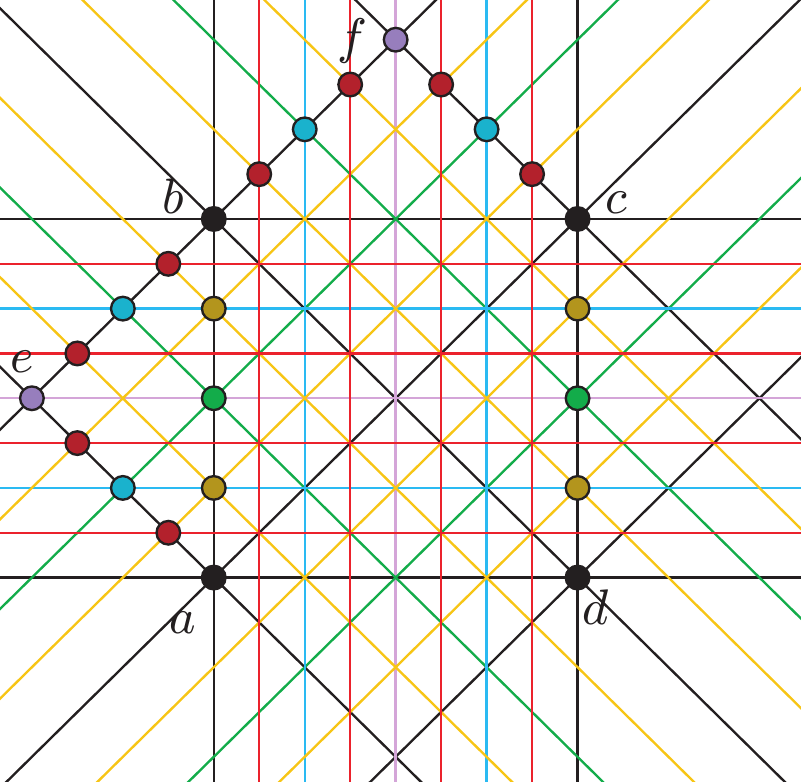}
\quad
b)
\includegraphics[scale=.375]{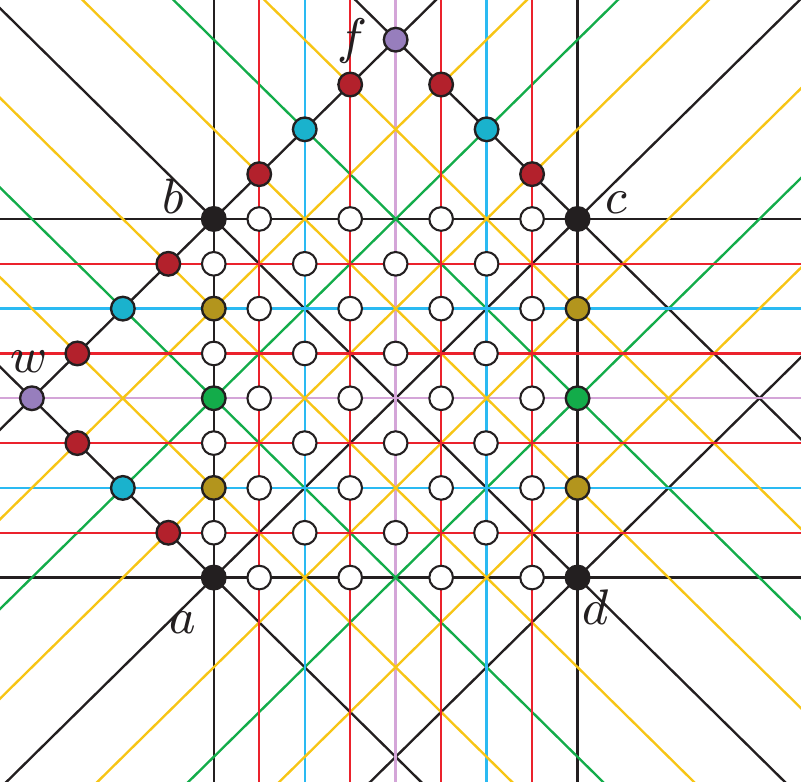}
\caption{An isostatic configuration with $40$ lattice points.\label{gridisostactic05zzz}}
\end{figure}

The isostatic configuration constructed in Corollary \ref{grid} can be further used to construct the example in the following theorem, which shows that infinitesimal rigidity does not imply rigidity; one cannot in general establish the non-rigidity of a configuration by
  considering the constraint matrix alone. 

\begin{theorem}
   There exists a configuration which is rigid but not infinitesimally rigid.
   \label{inf_not_rigid}
\end{theorem}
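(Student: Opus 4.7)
My plan is to exhibit an explicit configuration $(S, \mathbf{p}, \mathbf{l})$ whose projective rigidity matrix has a kernel of dimension strictly greater than $8$, yet for which no analytic path in the realization space $V(S)$ through $(\mathbf{p}, \mathbf{l})$ can leave the orbit of the projective group. The template I have in mind is the classical Asimow--Roth example in bar-joint rigidity: three collinear vertices joined by the three bars between them admit a first-order motion perpendicular to the line but are algebraically rigid at that realization.

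First I would use the isostatic dyadic lattice of Corollary~\ref{grid} as raw material. The lattice provides a large catalogue of points at dyadic rational coordinates, where ``accidental'' alignments and concurrences abound. From it I would extract a sub-configuration $S' = (P', L', I')$ realized at the same dyadic coordinates, with the combinatorial structure chosen so that at this specific realization the projective rigidity matrix $M(S', \mathbf{p}, \mathbf{l})$ has a non-trivial column dependency not coming from a trivial projective motion. The idea is to arrange things so that $S'$ is generically infinitesimally rigid, or nearly so, but acquires first-order flexibility at the lattice coordinates precisely because a classical incidence theorem (Pappus, Desargues, or Pascal) forces some combinatorially independent incidences to become dependent at these special coordinates, and simultaneously creates a self-stress.

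Second I would certify rigidity. Suppose, for contradiction, that there is an analytic path $(\mathbf{p}(t), \mathbf{l}(t))$ in $V(S')$ with $(\mathbf{p}(0), \mathbf{l}(0)) = (\mathbf{p}, \mathbf{l})$ not contained in the projective orbit. Then $(\mathbf{p}^{(1)}(0), \mathbf{l}^{(1)}(0))$ solves the infinitesimal rigidity system, and after subtracting a trivial motion we may take it to be one of the non-trivial flexes produced in the first step. Differentiating each incidence constraint twice at $t=0$ yields
\[
\mathbf{p}_i \cdot \mathbf{l}_j^{(2)}(0) + 2\,\mathbf{p}_i^{(1)}(0)\cdot \mathbf{l}_j^{(1)}(0) + \mathbf{p}_i^{(2)}(0)\cdot \mathbf{l}_j = 0
\]
for every $(p_i, l_j) \in I'$. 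This is the first-order system again in the unknowns $(\mathbf{p}^{(2)}(0), \mathbf{l}^{(2)}(0))$, now with inhomogeneity $-2\,\mathbf{p}^{(1)}(0)\cdot \mathbf{l}^{(1)}(0)$, and by Fredholm alternative it is solvable precisely when the inhomogeneity is orthogonal to every self-stress of $(S', \mathbf{p}, \mathbf{l})$. The example in the first step must be calibrated so that this orthogonality fails against the self-stress coming from the invoked incidence theorem: then no second-order lift exists, forcing $(\mathbf{p}^{(1)}(0), \mathbf{l}^{(1)}(0))$ to be trivial after all, contradicting the assumption. By an induction on the order of derivative mirroring the proof of the preceding theorem, all higher derivatives of the path are trivial, so the path is constant up to a projective motion.

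The main obstacle is the first step: producing a concrete sub-configuration $(S', \mathbf{p}, \mathbf{l})$ at dyadic coordinates that simultaneously exhibits (i) a non-trivial infinitesimal flex and (ii) a self-stress whose pairing with the quadratic obstruction $\mathbf{p}^{(1)}(0) \cdot \mathbf{l}^{(1)}(0)$ is non-zero. Classical incidence theorems supply (ii) automatically, since such a theorem is precisely the assertion that a certain row combination of the rigidity matrix must vanish at every realization, while the richness of dyadic coincidences in the lattice is the natural source of (i). Once the example is pinned down, the argument in the second step is essentially bookkeeping with Taylor coefficients of the incidence equations.
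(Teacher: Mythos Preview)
Your proposal sketches a legitimate strategy—prove second-order rigidity via the Fredholm obstruction pairing a self-stress against the quadratic term $\mathbf{p}^{(1)}\cdot\mathbf{l}^{(1)}$—but it is a plan, not a proof, and the plan has a real flaw in its central step.

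The difficulty is your source for the self-stress. You write that ``classical incidence theorems supply (ii) automatically, since such a theorem is precisely the assertion that a certain row combination of the rigidity matrix must vanish at every realization.'' That is exactly backwards for your purpose. If the self-stress $\omega$ comes from a Pappus- or Desargues-type theorem, it is present at \emph{every} realization of that combinatorial structure, and (as the paper discusses for both configurations) such structures are genuinely flexible: the non-trivial infinitesimal flex is the derivative of a finite motion, so it extends to all orders and the pairing $\sum_{(i,j)}\omega_{i,j}\,\mathbf{p}_i^{(1)}\cdot\mathbf{l}_j^{(1)}$ is forced to vanish. The self-stress you need must appear only at the special realization, simultaneously with the extra flex—the projective analogue of the collinear triangle you cite—and then the non-vanishing of the obstruction is a computation specific to that example, not something an incidence theorem hands you for free. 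You have neither identified such a configuration nor carried out that computation.

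The paper takes a different and more direct route. It builds a one-degree-of-freedom linkage (using the Pascal-line construction on five pinned points $a,b,c,d,e$) in which a point $f$ is forced to move along a fixed conic; it then adds one final incidence requiring $f$ to lie on a line $ox$ engineered to be \emph{tangent} to that conic. Rigidity is immediate from the geometry—a tangent meets the conic in a single point—while infinitesimal flexibility follows because the velocity of $f$ along the conic lies in the tangent direction, hence along $ox$, so the last incidence is satisfied to first order. No second-order or Fredholm argument is needed to establish rigidity; the paper only invokes second-order considerations later, and there the obstruction is read off as the centripetal acceleration normal to $ox$, not via a self-stress pairing.
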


\begin{proof}
  This proof uses the classical method of constructing points on a quadratic curve given five points.

  Let there be a projective
   configuration as in the following figure with the white vertices pinned, from which the  black points and lines can be constructed and added to the  rigid configuration. The white points can be constructed, and pinned, as part of an isostatic grid, as in Corollary \ref{grid}. This can be done starting from four arbitrary points. See Figure~\ref{global01fig} with eight white grid pints derived from the original four. 
   \begin{figure}[htb]
   \centering
   \captionsetup{width=0.9\textwidth}
   \includegraphics[width=.6\textwidth]{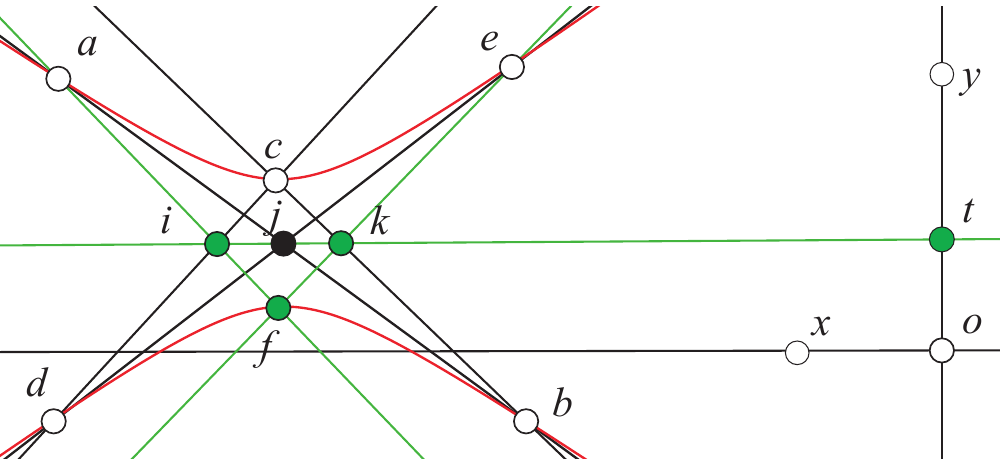} 
   \caption{A projective mechanism, pinned at the white points, whose motion requires point $f$ to move along the red hyperbola.\label{global01fig}}
   \end{figure}

   Now, add a new point $t$, in green, incident to $oy$, creating a one-degree of freedom mechanism.
The point $t$ moves freely on the fixed line $oy$, and the configuration will remain a mechanism if we add line $jt$, which we will think of as the Pascal line of the hexagon $abcdef$ which appears after constructing the additional green points and lines;
the intersection $i$ of the lines $cd$ and $jt$, the intersection $k$ of the lines $bc$ and $jt$, the lines $ai$ and $ek$, and finally the intersection $f$ of the lines $ai$ and $ek$.

Each step preserves the mechanism, and forces the point $f$ to move on the hyperbola determined by the pinned
points $\{a,b,c,d,e\}$. Finally, require an incidence between $f$ and the line $ox$.
    Consider what happens if we engineer this configuration so that the line $ox$
  cuts the hyperbola in exactly one point, a point of tangency to the hyperbola through the points
   $\{a,b,c,d,e\}$. 
   Care must be taken to get the original pinning so that the tangency can be achieved.
   
   Now the resulting configuration is rigid, and there is a unique realization which satisfies the
   final required incidence. We will show that this configuration
   is not infinitesimally rigid.  A velocity at $t$
   along $oy$ preserves the mechanism, and has consequential velocities which are zero on all but the
   yellow points and lines. These velocities must be in the kernel of the incidence rigidity matrix, so, we know that the consequential velocity at $f$ must lie along
   the pinned line $ox$. See Figure~\ref{rigidnotinfinitesimalfig}.
   \begin{figure}[htb]
   \centering
      \includegraphics[width=.7\textwidth]{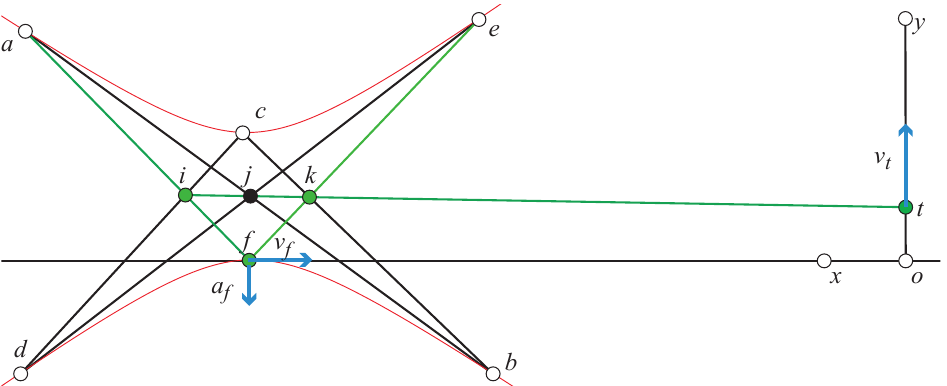}
    \caption{A rigid pinned configuration with an infinitesimal motion.\label{rigidnotinfinitesimalfig}}
    \end{figure}
   in which the velocities at $i$ and $k$, easily seen to be non-zero if $\mathbf{v}_t$ is non-zero, are not shown. All green points move infinitesimally or none.
   We know that there is
   a finite motion without the final incidence which is consistent with the constraint of the last incidence,  so
   this velocity assignment is a non-trivial infinitesimal motion of the configuration.
\end{proof}

\subsection{Second order rigidity implies rigidity}
Here we follow the method of Connelly, see~\cite{connelly1980rigidity,10.1007/BF02574003}.

In the following, we will again pin four points in general position to eliminate trivial motions.
  A second order flex is a solution to the system of equations comprising the 
  first two derivatives of the pinned configuration: 
\begin{eqnarray*}
   0 &=& \mathbf{p}_i\cdot \mathbf{l}^{(1)}_j + \mathbf{p}_i^{(1)}\cdot \mathbf{l}_j , \quad \forall (i,j) \in I
 \\ 0 &=& \mathbf{p}_i\cdot \mathbf{l}^{(2)}_j + 2\mathbf{p}^{(1)}_i\cdot \mathbf{l}^{(1)}_j + \mathbf{p}_i^{(2)}\cdot \mathbf{l}_j, \quad \forall (i,j) \in I
\end{eqnarray*}
with the derivatives and second derivatives regarded as the unknowns to be solved for.
Every solution is a pinned second order flex, and a configuration is said to be
{\em second-order rigid} if every pinned second order flex has only
zero first derivative terms.

Note that, just as infinitesimal rigidity is only a condition on the kernel of a matrix,
the second-order condition is only on the system of equations. We are essentially looking for
candidates for velocities and accelerations.

\begin{lemma}
    If the pinned configuration is second order rigid, then every motion satisfies, for all
    $n> 0$,
   $\mathbf{p}_i^{(n)}(0) = \mathbf{0}$ for all points
   and
   $\mathbf{l}_j^{(n)}(0) = \mathbf{0}$ for all lines.
\end{lemma}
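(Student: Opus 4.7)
The plan is to imitate the induction used for the infinitesimal-rigidity analogue (the previous theorem), but at each step I would couple the $n$-th and $2n$-th derivatives of each incidence constraint so the argument can invoke \emph{second}-order rigidity rather than just first-order rigidity.

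First I would dispose of the base case $n=1$. Taking the first and second derivatives of $\mathbf{p}_i\cdot\mathbf{l}_j$ (which is constant along the motion) and evaluating at $t=0$ yields exactly the first- and second-order flex equations for the pair with velocity $(\mathbf{p}^{(1)}(0),\mathbf{l}^{(1)}(0))$ and acceleration $(\mathbf{p}^{(2)}(0),\mathbf{l}^{(2)}(0))$. Pinning is automatic---pinned points are constant in $t$, so all their derivatives vanish---hence this is a pinned second-order flex, and second-order rigidity forces its velocity to vanish.

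For the inductive step, fix $n\geq 2$ and suppose $\mathbf{p}^{(m)}(0)=\mathbf{0}=\mathbf{l}^{(m)}(0)$ for $1\leq m\leq n-1$. I would apply the Leibniz rule to the $n$-th and $2n$-th derivatives of $\mathbf{p}_i\cdot\mathbf{l}_j$: the inductive hypothesis kills every term whose differentiation order on either factor lies in $\{1,\ldots,n-1\}$, so only orders in $\{0,n,2n\}$ survive. The identities collapse to
\begin{align*}
\mathbf{p}_i\cdot\mathbf{l}_j^{(n)}(0)+\mathbf{p}_i^{(n)}(0)\cdot\mathbf{l}_j &= 0,\\
\mathbf{p}_i\cdot\mathbf{l}_j^{(2n)}(0)+\binom{2n}{n}\mathbf{p}_i^{(n)}(0)\cdot\mathbf{l}_j^{(n)}(0)+\mathbf{p}_i^{(2n)}(0)\cdot\mathbf{l}_j &= 0,
\end{align*}
for every $(i,j)\in I$. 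Setting $c=2/\binom{2n}{n}$, I would then propose the pinned second-order flex with velocity $(\mathbf{p}^{(n)}(0),\mathbf{l}^{(n)}(0))$ and acceleration $(c\,\mathbf{p}^{(2n)}(0),\,c\,\mathbf{l}^{(2n)}(0))$: the first displayed identity is precisely its first-order flex equation, and $c$ times the second is exactly its second-order flex equation (since $c\binom{2n}{n}=2$ reproduces the required cross-term coefficient). Second-order rigidity now forces $\mathbf{p}^{(n)}(0)=\mathbf{0}=\mathbf{l}^{(n)}(0)$, completing the induction.

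The only delicate point---and the one genuinely new ingredient compared with the infinitesimal case---is choosing the scalar $c$ that matches the $\binom{2n}{n}$ produced by the Leibniz expansion against the $2$ in the middle term of the second-order flex equation. This pairing is what lets the induction leverage second-order rigidity; without it, the $n$-th derivative identity alone would only assert that $(\mathbf{p}^{(n)}(0),\mathbf{l}^{(n)}(0))$ is a first-order flex, which is not sufficient to conclude it vanishes.
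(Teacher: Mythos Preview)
Your proposal is correct and follows essentially the same argument as the paper: induct on $n$, use the Leibniz expansion of the $n$-th and $2n$-th derivatives of the incidence constraint, observe that only the orders $0$, $n$, $2n$ survive, and then rescale the $2n$-th identity by $2/\binom{2n}{n}$ so that $(\mathbf{p}^{(n)}(0),\mathbf{l}^{(n)}(0))$ together with the rescaled $(\mathbf{p}^{(2n)}(0),\mathbf{l}^{(2n)}(0))$ form a pinned second-order flex. The paper's proof is exactly this, with the same scaling constant.
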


\begin{proof}
   The proof is again by induction on $n$.  Second-order rigidity gives the base case, so let
   $n>1$ and suppose
   $\mathbf{p}_i^{(m)}(0) = \mathbf{0}$ for all points
   and
   $\mathbf{l}_j^{(m)}(0) = \mathbf{0}$ for all lines for all $m < n$.
   Consider now the $n$'th derivative and the $2n$'th derivative.
   The $n$'th derivative
   has only two non-zero terms by the inductive hypothesis, reducing as before to
   $ \mathbf{p}_i(0)\cdot \mathbf{l}^{(n)}_j(0) + \mathbf{p}^{(n)}_i(0)\cdot \mathbf{l}_j(0)  = 0$ for
   for all incident pairs. For the $2n$'th derivative, most terms have either a small order derivative
   of the point or the line term, and so are zero. Cancelling such terms leaves
      $\mathbf{p}_i(0)\cdot \mathbf{l}^{(2n)}_j(0)  +
   {2n \choose n} \mathbf{p}^{(n)}_i(0)\cdot \mathbf{l}^{(n)}_j(0)
   + \mathbf{p}^{(2n)}_i(0)\cdot \mathbf{l}_j(0)=0$. It is now easy to verify that the $n$'th derivative and
   the $2/{2n \choose n}$ times the $2n$'th derivative comprise a pinned second order flex, hence
   $\mathbf{p}_i^{(n)}(0) = \mathbf{l}_j^{(n)}(0) = \mathbf{0}$ for all points and lines, as required.
\end{proof}

Again, the existence of analytic motions gives
\begin{theorem}
   Every second-order rigid projective configuration is rigid.
\end{theorem}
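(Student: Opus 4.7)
The plan is to mirror the argument used earlier for ``infinitesimal rigidity implies rigidity''. The preceding lemma has already done the real work: assuming second-order rigidity of the pinned configuration, every smooth incidence-preserving motion $(\mathbf{p}(t),\mathbf{l}(t))$ satisfies $\mathbf{p}_i^{(n)}(0)=\mathbf{0}$ and $\mathbf{l}_j^{(n)}(0)=\mathbf{0}$ for all $n\ge 1$. I only need to promote this vanishing of all Taylor coefficients at $0$ into the statement that the motion itself is constant in a neighborhood of $0$.

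I would first reduce to the pinned setting. Given any smooth motion of the configuration, the pinning construction from Section~5 produces a smoothly varying projective transformation $A_t$ that fixes four chosen points in general position; composing with $A_t$ on the points and with $(A_t^{-1})^T$ on the lines yields a pinned motion to which the lemma applies. Concluding that this composite motion is constant will say precisely that the original motion lies along the family $A_t^{-1}$ of projective transformations, which is the definition of triviality.

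Finally, I would invoke analyticity exactly as in the first-order case: $V(S)$ is a real algebraic variety, so by the Chapter~3 result of Milnor cited earlier, any non-constant smooth path in $V(S)$ through the given point can be replaced by a non-constant real-analytic path. A real-analytic function all of whose derivatives at $0$ vanish is identically zero near $0$; applied coordinate-wise this forces the analytic representative of the pinned motion to be constant, and hence the original motion is trivial. The main obstacle is precisely this gap between formal vanishing of Taylor coefficients and constancy of a general smooth path, and the Milnor analyticity result is what closes it. The only technical point to check is that composing with $A_t$ preserves the existence of an analytic representative, which is immediate since $A_t$ is built by rational operations from the components of the $\mathbf{p}_i(t)$.
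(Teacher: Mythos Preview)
Your proposal is correct and follows the same approach as the paper: the paper's entire proof is the one-line remark ``Again, the existence of analytic motions gives'' the theorem, invoking the preceding lemma together with the Milnor analyticity argument already cited in the first-order case. You have simply spelled out the pinning reduction and the analyticity step in more detail than the paper does.
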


\begin{theorem}\label{SecondOrderTheorem}
   There exists a configuration which is second order rigid but not infinitesimally rigid.
\end{theorem}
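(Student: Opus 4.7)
The plan is to reuse the very same configuration built in the proof of Theorem~\ref{inf_not_rigid}: an isostatic pinned grid supplying the white pinned points, augmented by a point $t$ free on the pinned line $oy$, together with the Pascal-style construction of $f$ from the hexagon $abcdef$ and the incidence of $f$ with the pinned line $ox$, arranged so that $ox$ is tangent to the conic through $\{a,b,c,d,e\}$ at $f$. By that theorem this configuration is rigid yet not infinitesimally rigid, so it suffices to show that no nonzero infinitesimal flex can be extended to a second-order flex.

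The first step is to describe the space of infinitesimal flexes. Modulo the pins the flex is one-dimensional, parameterized by the velocity $v$ of $t$ along $oy$; all other first derivatives $\mathbf{p}_i^{(1)}, \mathbf{l}_j^{(1)}$ are linear functions of $v$, determined by walking through the Pascal construction $j\to i \to k \to f$. In particular $\mathbf{p}_f^{(1)}$ is a specific nonzero multiple of $v$, and by the incidence $(f,ox)$ it points along $ox$, consistent with the fact that $ox$ is tangent to the conic at $f$.

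Next we try to solve, with these first derivatives fixed, the pinned second-order system
\[
\mathbf{p}_i\cdot \mathbf{l}_j^{(2)} + 2\mathbf{p}_i^{(1)}\cdot \mathbf{l}_j^{(1)} + \mathbf{p}_i^{(2)}\cdot \mathbf{l}_j = 0, \qquad (i,j)\in I,
\]
for the unknown second derivatives. The incidence of $f$ with the pinned line $ox$ gives $\mathbf{l}_{ox}^{(1)}=\mathbf{l}_{ox}^{(2)}=\mathbf{0}$, so this equation collapses to $\mathbf{p}_f^{(2)}\cdot \mathbf{l}_{ox}=0$, which forces the acceleration of $f$ to be parallel to $ox$. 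On the other hand, propagating the second-order equations through the Pascal subconstruction (the incidences at $i$, $k$, and the lines $ai$, $ek$ with the five pinned points $a,\dots,e$) determines $\mathbf{p}_f^{(2)}$ up to the one-dimensional freedom of sliding along the tangent, and its component normal to $ox$ equals $v^2$ times a nonzero constant that records the curvature of the conic at $f$. This is exactly the Braikenridge--Maclaurin manifestation of the fact that $f$ is being pushed along the conic and the conic curves away from its tangent at second order. Combining the two conclusions yields $v^2\kappa=0$ with $\kappa\neq 0$, so $v=0$ and every second-order flex is trivial.

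The main obstacle is the curvature computation in Step~3: one must show rigorously that the second derivatives dictated by the Pascal portion of the system carry a nonzero component of $\mathbf{p}_f^{(2)}$ transverse to $ox$, proportional to $v^2$. The cleanest route is to avoid coordinate bashing and instead argue analytically: the Pascal mechanism without the incidence $(f,ox)$ admits, by construction, an analytic finite motion parameterized by the position of $t$ on $oy$, along which $f$ traces the conic through $a,b,c,d,e$; differentiating the resulting analytic curve $f(s)$ twice at the tangency point produces the required nonzero normal component of acceleration from the nonvanishing curvature of a smooth conic. Once this geometric input is in hand, the algebraic contradiction with $\mathbf{p}_f^{(2)}\cdot\mathbf{l}_{ox}=0$ is immediate, and the theorem follows.
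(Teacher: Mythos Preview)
Your proposal is correct and follows essentially the same approach as the paper: reuse the tangency configuration from Theorem~\ref{inf_not_rigid}, observe that any nonzero first-order flex forces a nonzero centripetal (normal-to-$ox$) component in $\mathbf{p}_f^{(2)}$ coming from the curvature of the conic traced by the released mechanism, and contrast this with the requirement $\mathbf{p}_f^{(2)}\cdot\mathbf{l}_{ox}=0$ imposed by the pinned incidence $(f,ox)$. Your write-up is in fact more careful than the paper's sketch in two respects: you make explicit that the second-order solutions for the mechanism with fixed first-order part form an affine line whose direction is the tangential flex (so the normal component of $\mathbf{p}_f^{(2)}$ is well defined), and you spell out that this normal component can be read off from the actual analytic motion rather than by coordinate computation.
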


\begin{proof}
   The non-trivial velocity assignments of the configuration constructed in Theorem \ref{inf_not_rigid} are consistent with all incidences and pins. They are also consistent with the mechanism obtained if the incidence between $f$ and $ox$ is released, in which case the
   non-zero velocity $v_f$ of $f$ along $ox$ will force a non-zero centripetal acceleration at $f$, $a_t$, with $|a_f| = |v_f|^2/r$, where $r$ is the radius of curvature of the hyperbola at the point of tangency, and directed  perpendicular to $ox$, the tangent to the hyperbola.

   With the incidence from $f$ to $ox$ restored to complete the configuration, the component of acceleration at $F$ perpendicular to the line must be zero, forcing the velocity $v_t$ at $t$ which induced the only infinitesimal motion also to be zero. This says that a second order flex of the pinned configuration must be infinitesimally fixed, hence the configuration is second order rigid. 
\end{proof}

Theorem~\ref{SecondOrderTheorem} gives an algebraic proof that the configuration of Figure~\ref{rigidnotinfinitesimalfig} is rigid. Geometrically, any extension of an infinitesimal motion to a finite motion would require the point to be torn off the line by the curvature of the hyperbola.


\begin{theorem}
   There exists a rigid configuration which has a non-trivial second-order flex.
\end{theorem}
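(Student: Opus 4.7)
My plan is to modify the construction of Theorems~\ref{inf_not_rigid} and~\ref{SecondOrderTheorem} so that the algebraic locus forcing rigidity has \emph{third}-order (rather than merely second-order) contact with the constraining line at the critical point. Recall that in Theorem~\ref{SecondOrderTheorem} the second-order obstruction was exactly the centripetal acceleration $|v_f|^2/r$ normal to $ox$, where $r$ is the radius of curvature of the hyperbola at the tangency. If instead the locus of $f$ can be arranged to have an \emph{inflection} tangent to $ox$ at $f$, this normal component vanishes and the second-order equation at the incidence $(f,ox)$ is consistent with a nonzero velocity $v_t$, while finite motion is still forbidden because the contact is of finite order.

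The construction would proceed in three steps. First, replace the single Pascal-line mechanism of Theorem~\ref{inf_not_rigid} by a chain of two projective constructions, so that the locus of the distinguished movable point $f$ (as the driving parameter $t$ moves along $oy$) is a rational curve $C$ of degree at least $3$ rather than a conic. Generic rational curves of degree $\geq 3$ in $\mathbb{RP}^2$ carry real inflections, and a smooth one-parameter deformation of the pinned data varies the position and tangent direction of such an inflection continuously. Second, specialize the pinning so that a chosen inflection of $C$ has tangent line equal to the chosen line $ox$ through the intended position of $f$, and then close the configuration by adding the incidence $(f,ox)$. Third, verify both that the configuration is rigid (the contact order between $C$ and $ox$ at $f$ is exactly $3$, so $f$ is an isolated real intersection of $C$ with $ox$ near itself, killing all finite one-parameter motions) and that the desired second-order flex exists: the first-order flex inherited from the unconstrained chain mechanism assigns $v_f$ along $ox$, and because $C$ has inflectional tangent $ox$ at $f$ the induced normal component of $\mathbf{p}_f^{(2)}$ is zero, so a compatible assignment of accelerations satisfies
\[
\mathbf{p}_f\cdot \mathbf{l}_{ox}^{(2)} + 2\,\mathbf{p}_f^{(1)}\cdot \mathbf{l}_{ox}^{(1)} + \mathbf{p}_f^{(2)}\cdot \mathbf{l}_{ox} = 0
\]
without requiring $v_t = 0$. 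Propagating these accelerations through the earlier incidences is exactly the solvability step already executed in the lemma preceding the theorem, so it is automatic.

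The main obstacle I anticipate is the synthetic realization: one must produce an incidence structure whose associated locus $C$ genuinely admits an inflection whose tangent can be tuned to the given pinned line $ox$, while keeping every earlier incidence independent so that Theorem~\ref{0exentiontheorem} still guarantees isostaticity of the intermediate subconfigurations. A counting argument alone will not do, because the inflection condition is a codimension-one specialization on the parameter space and could in principle collapse an earlier incidence. I would handle this either by a transversality/deformation argument (the locus of configurations with an inflection tangent to $ox$ has codimension one and meets the independent open set nontrivially) or, more concretely, by exhibiting explicit dyadic coordinates as in Corollary~\ref{grid}, checking independence directly. Either way, once such a specialization is exhibited, the rigidity and the second-order flex follow by the contact-order analysis above.
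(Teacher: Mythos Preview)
Your geometric idea is sound: arranging order-$3$ contact between the locus of $f$ and the line $ox$ kills the normal component of $\mathbf{p}_f^{(2)}$, so the second-order equation at $(f,ox)$ no longer forces $v_t=0$, while the isolated contact still gives rigidity. The paper, however, takes a quite different and shorter route that avoids building any new curve. Its key observation is purely algebraic: if a configuration has a non-trivial infinitesimal flex $p$, then setting all first derivatives to zero and all second derivatives equal to the components of $p$ is automatically a valid second-order flex, since with vanishing velocities the second-order equations collapse to the first-order ones. Thus the rigid configuration of Theorem~\ref{inf_not_rigid} already carries, at its distinguished point, a second-order flex with zero velocity but non-zero acceleration. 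The paper then \emph{nests}: in a fresh copy of the hyperbola construction it replaces the honest pin at $x$ by such a rigid-but-infinitesimally-flexible sub-configuration, oriented so that the allowed acceleration at $x$ is perpendicular to $ox$. Now $x$ is rigid and first-order fixed, but admits a second-order displacement; the resulting freedom in $\mathbf{l}_{ox}^{(2)}$ absorbs the centripetal term $\mathbf{p}_f^{(2)}\cdot\mathbf{l}_{ox}$ that in Theorem~\ref{SecondOrderTheorem} had forced $v_t=0$. Rigidity is inherited because both layers are rigid. What this buys is modularity --- the paper recycles Theorem~\ref{inf_not_rigid} verbatim, with no higher-degree loci, no inflection tuning, no transversality. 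Your inflection route would give a self-contained single-layer example and extends cleanly to $k$th-order flexes via order-$(k{+}1)$ contact, but it requires the new mechanism you have not yet built; and note that naively composing two Pascal-line mechanisms driven by the same parameter still traces a conic, so the degree-raising gadget needs a genuinely different design.
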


\begin{proof}
   In the proof of Theorem~\ref{inf_not_rigid}, Figure~\ref{rigidnotinfinitesimalfig} exhibits a pinned rigid projective configuration which
   is second-order rigid, and hence rigid.  That example does have a non-trivial infinitesimal flex $p$. We may use $p$ to construct a trivial
   second-order flex, $q$,
   by taking all the velocities of $q$ to be zero and all the accelerations of $q$ to be the corresponding velocities of $p$.

   So there exist rigid pinned-configurations with trivial but non-zero second-order flexes.  Now, instead starting the construction of 
   Figure~\ref{rigidnotinfinitesimalfig} with the point $x$ to be pinned point, we construct $x$ from existing pins to be rigid but having a 
   non-zero second order flex. See Figure~Figure~\ref{rigidnotsecondorder01fig}
   \begin{figure}[htb]
   \centering
   \includegraphics[width=.75\textwidth]{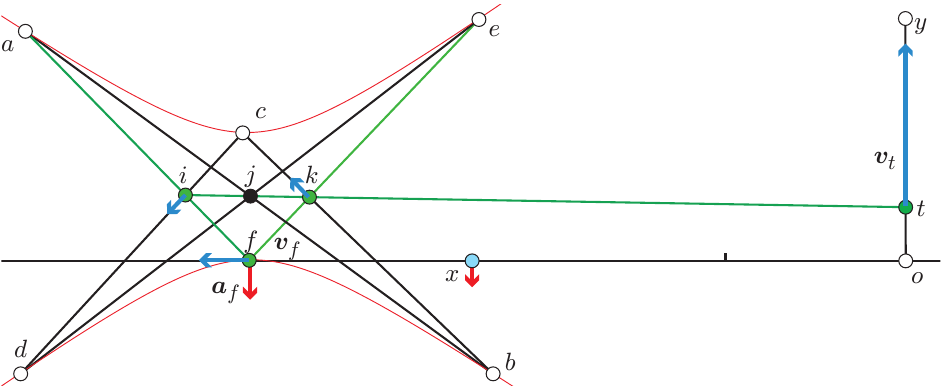}
   \caption{Resolving the acceleration by a weaker pin\label{rigidnotsecondorder01fig}}
   \end{figure}
   in which the points and lines constraining $x$ are not depicted.  
   The point $x$ still plays the role of a pin in the sense of inhibiting any finite
   motion, but $x$ may be assigned a non-zero acceleration of any magnitude which is perpendicular to the line $ox$. With this weakened pin $x$, the construction proceeds as before.  Now, however, the acceleration at $f$, which was previously inhibited by the pinned line $ox$, can be resolved, so the configuration has a non-trivial second order flex. The configuration is rigid, but second order flexible.
\end{proof}

\subsection{Alternate Realizations}\label{sec:global}

Given a rigid configuration, there are no other realizations of it in some neighborhood of the realization space.  It is natural to
ask if there are realizations farther away.  Suppose we consider realizations which, as in the previous example, begin with four points pinned in general position.

\begin{theorem}\label{globaltheorem}

   There exists a configuration whose pinned realization space consists of exactly two realizations, both of which are infinitesimally rigid, and not projectively equivalent to one another. 
\end{theorem}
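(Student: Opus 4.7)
The plan is to modify the construction used in the proof of Theorem~\ref{inf_not_rigid} (see Figure~\ref{rigidnotinfinitesimalfig}): instead of engineering the pinned data so that the line $ox$ is tangent to the hyperbola through $\{a,b,c,d,e\}$, I will arrange the pinned data so that $ox$ meets this hyperbola \emph{transversally} in exactly two points. Everything else in the construction is carried out identically: start from the isostatic dyadic grid of Corollary~\ref{grid} to provide the pinned white points $\{a,b,c,d,e,o,x,y\}$ (in general position, with no three collinear among the first four), add the point $t$ on $oy$, the Pascal line $jt$, the auxiliary intersections $i,k$, and define $f$ as the intersection of lines $ai$ and $ek$. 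By the Pascal argument used in Theorem~\ref{inf_not_rigid}, as $t$ varies on $oy$ the point $f$ traces the unique conic through $\{a,b,c,d,e\}$; the remaining constraint is that $f$ must be incident to the pinned line $ox$.

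Since $ox$ meets the hyperbola in exactly two distinct points $f_1$ and $f_2$, there are exactly two values of $t$ on $oy$ which complete the configuration to a valid realization, yielding two realizations $R_1$ and $R_2$ in the pinned realization space. I expect this geometric arrangement to be achievable by a small perturbation of the tangency configuration of Theorem~\ref{inf_not_rigid}: transversality is an open condition on the pinned data, so it can be secured while keeping the underlying grid isostatic and the combinatorial incidence pattern intact.

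Next I show each $R_k$ is infinitesimally rigid. As in Theorem~\ref{inf_not_rigid}, the only candidate for a non-trivial pinned infinitesimal flex comes from an infinitesimal velocity of $t$ along $oy$; all consequential velocities are determined by the construction, and in particular the induced velocity $\Delta \mathbf{f}$ is tangent to the hyperbola at $f_k$. The pinned incidence $f\in ox$ forces $\Delta \mathbf{f}$ to lie along $ox$. Under the tangency assumption of Theorem~\ref{inf_not_rigid}, these two directions coincided, permitting a non-zero flex; under transversality they intersect only at $\mathbf{0}$, so $\Delta \mathbf{f}=\mathbf 0$, which propagates back to $\Delta \mathbf t = \mathbf 0$ and pins all remaining velocities. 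Thus both $R_1$ and $R_2$ are infinitesimally rigid.

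Finally, $R_1$ and $R_2$ cannot be projectively equivalent: any projective transformation relating them must fix the four pinned general-position white points, but the only projectivity fixing four points of the plane in general position is the identity (this is the pinning result used throughout Section~\ref{trivial}), while $R_1\ne R_2$ since $f_1\ne f_2$. The main obstacle in this plan is the geometric realizability of the desired transversal intersection while simultaneously keeping the conic through $\{a,b,c,d,e\}$ a genuine hyperbola meeting the prescribed pinned line $ox$ in two points, all within an isostatic grid; I anticipate handling this by choosing the level~$N$ of the dyadic grid large enough to obtain the needed flexibility in placing the pins, and then appealing to the openness of transversality to confirm the configuration exists.
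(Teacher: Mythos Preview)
Your proposal is correct and follows essentially the same approach as the paper: replace the tangency of $ox$ with the hyperbola by a transversal two-point intersection, obtain two realizations from the same pinned data, and use the standard pinning argument to conclude projective inequivalence. In fact you supply more detail than the paper does---the paper's proof merely asserts that the two realizations exist and are inequivalent, whereas you explicitly justify infinitesimal rigidity via the transversality of the hyperbola tangent direction and the direction of $ox$ at $f_k$, which is exactly the right mechanism.
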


\begin{proof}

Consider the configuration constructed in the proof of Theorem \ref{inf_not_rigid}, but suppose that the configuration is constructed so that $ox$ cuts the hyperbola in exactly two points. Then the configuration can be constructed in two different ways. These two configurations will not be projectively equivalent, since they can be constructed starting from the same four points.
   See the configurations of Figure~\ref{rigidnotglobal29fig}.
   \begin{figure}[htb]
   \centering
   \captionsetup{width=0.9\textwidth}\includegraphics[width=0.9\textwidth]{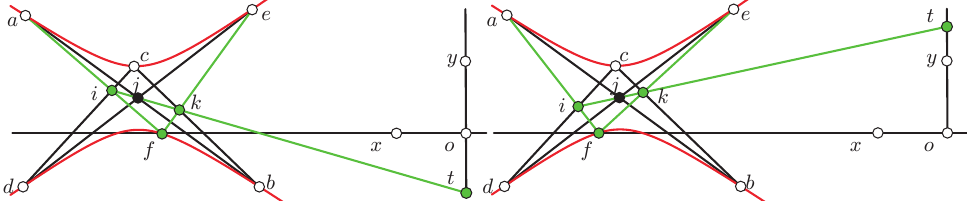} 
   \caption{There are two isolated points in the realization space of the pinned configuration.\label{rigidnotglobal29fig}}
   \end{figure}

   \end{proof}

   In the context of bar and joint rigidity, a result like Theorem~\ref{globaltheorem} would be
be expressed in terms of non-global rigidity, or the lack of unique realizability. There the connectivity
of the realization space is an important consideration. For  projective configurations, however,
any two elements of the realization space are joined by a path.
Suppose you are given two realizations, and assume without loss of generality that they have no points on
the line at infinity. The first may be dilated, shrinking it to one in which all the vertices have
the same coordinates, so keeping the directions of all lines  preserved.  In this degenerate position, the
lines are now free to rotate independently about the superimposed points and adopt the directions of the second realization.
Then, from that degenerate position, the points may be re-separated and expanded to match the second realization.
Notice that, in this path, the only non-trivial motions are in the collapsed positions. So, if we release the
four points in general position, not only is the unpinned realization space connected, it will likely contain
partially degenerate examples, and those examples may be movable.

Note that, if the line $ox$ does not meet the
hyperbola induced by the points at
$\{a,b,c,d,e\}$, then there is no realization
of the configuration at all. 
So the pinned realization space may be empty, whereas the unpinned realization space is never empty.  So pinning, useful for rigidity and infinitesimal rigidity, 
must be exercised with care if one wants to study the entire realization space.



\section{Self-stresses}

\subsection{Row dependencies: projective self-stresses}

In variations of rigidity theory, the theory of infinitesimal rigidity (column dependencies) is a dual theory of row dependencies which engineers identify as `statics'. Statics generally gives a different set of tools and insights.   What we are looking at is a set of scalars, or \emph{stress-coefficients}, $\omega_{i,j}$ assigned to the incidence of point $p_i$ with line $l_j$, and for technical  reasons we define the stress-coefficient to be zero for all non-incident pairs. 
If the vector of stress-coefficients lies in the co-kernel of the projective rigidity  matrix $M(S,\mathbf{p},\mathbf{l})$, that is, if we have
$$\sum_{k \in I}\omega_{k} Row_{k}= [0\ldots 0|0 \ldots 0],$$ where $k$ denotes the incidence recorded in the $k$'th row, $Row_{k}$, in $M(S,\mathbf{p},\mathbf{l})$,
then it  is called a \emph{(projective)}\emph{equilibrium stress} or \emph{self-stress}.

This sum gives a scalar  equation for each column -- or a vector equation for each point and each line:
$$
\sum_{j \in L}\omega_{i,j}\mathbf{p}_i = \mathbf{0},
\qquad
\sum_{i \in P}\omega_{i,j}\mathbf{l}_j = \mathbf{0}.
$$

The row space of  $M(S,\mathbf{p},\mathbf{l})$ is the set of vectors of the form
$$\sum\omega_{k} Row_{k},$$
for any scalars $\omega_{k}$.
The  conditions that the row space of $M(S,\mathbf{p},\mathbf{l})$ has rank
$2|P| + 2|L| - 8$, i.e.\ the configuration is infinitesimally rigid,
is equivalent to the condition of {\em static rigidity}, which says that
the row space of $M(S,\mathbf{p},\mathbf{l})$ spans the entire orthogonal complement of the
space of trivial infinitesimal motions.

\begin{example}\rm
We may ask if we can find the coordinates for the points and lines of a self-stress of a complete quadrangle. Since every point in a complete quadrangle is only on two lines, we see that there can only be a nonzero self-stress if all four lines coincide. If $p_j$ is on lines $l_i$ and $l_k$ then $\omega_{i,j} = -\omega_{k,j}$. Such an assignment is not unique and each assignment yields four linear homogeneous equations for the coordinates of the points which, with the assigned stress coefficients have to sum to zero. Since this system has at least 2 free variables we can get a solution for every feasible stress assignment. In such a position, the complete quadrangle is not infinitesimally rigid. It also has an actual motion, namely any point can move on the line freely with the others fixed. 
\end{example}

\begin{example}\rm
Recall from Example~\ref{subsec:pascal} that there is no row dependence in the rigidity matrix for Pascal's configuration (the matrix has full rank). This might be surprising at first in light of Pascal's theorem. However,  observe that in the 7 lines, 9 points configuration there are 6 points (the points on the hexagon) which are only on two lines. (side note: adding intersection points of two distinct lines to a given incidence structure is a 0-extension). To build the configuration, we can start with a line, put 3 points, say $a,b,c$  on it, then two more lines through the chosen points, say $l_{a_1}$, $l_{a_2}$; $l_{b_1}$, $l_{b_2}$, $l_{c_1}$, $l_{c_2}$. Then add the intersection points $1=[l_{a_1} \wedge l_{b_2}], 2=[l_{b_2} \wedge l_{c_1}], 3=[l_{c_1} \wedge l_{a_2}], 4=[l_{a_2} \wedge l_{b_1}], 5=[l_{b_1} \wedge l_{c_2}], 6=[l_{c_2} \wedge l_{a_1}]$. This configuration of 9 points and 7 lines is clearly an independent incidence structure. Pascal's theorem tells us that the points $\{1,2,3,4,5, 6\}$ lie on a conic but the conic is not a constraint! In Pappus' theorem, on the other hand, the conic shows up as a constraint via two extra lines. 
\end{example}


\subsection{The three-fold balance }
\subsubsection*{A geometric interpretation of projective equilibrium stresses}

Suppose we have an equilibrium stress in a configuration $(S, \mathbf{p}, \mathbf{l})$. For each line $l_j$ we have
$ \sum_{p_i \in P} \omega_{i,j}\mathbf{p}_i = \mathbf{0}$ and for each point $p_i$ we have
$ \sum_{l_j \in L} \omega_{i,j}\mathbf{l}_i = \mathbf{0}$,
where where the only non-zero terms correspond to incidences.

For each line $l_j$, the equilibrium equation is equivalent the two equations
\begin{equation} \sum_{p_i \in P} \omega_{i,j}\mathbf{p}_i\cdot \mathbf{l}_j = 0, \qquad
\sum_{p_i \in P} \omega_{i,j}\mathbf{p}_i \times \mathbf{l}_j = \mathbf{0}, \label{stressspliteq}
\end{equation}
The first one is trivial if the line $\mathbf{l}_j$ passes through the origin.  If not, then $(x_i,y_i) \cdot (a_j,b_j)$ is
$-1$ for any finite point $\mathbf{p}_i=(x_i:y_i:1)$, and $0$ for any point $\mathbf{p}_i=(x_i:y_i:0)$ at infinity incident to $l_j$.

\begin{figure}[htb]
\centering
\captionsetup{width=0.9\textwidth}
a)\includegraphics[scale=.82]{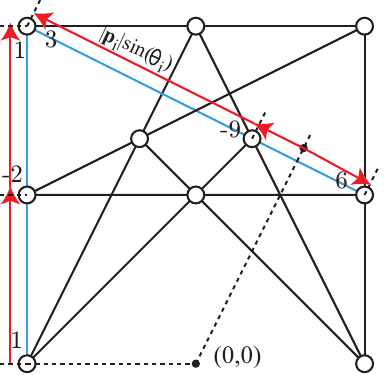}
b)\includegraphics[scale=.82]{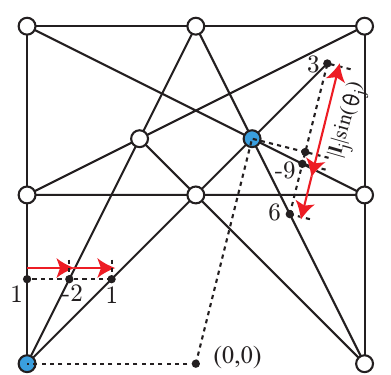}
\caption{Illustrating the three types of balance required for a projective stress. Two points and two lines are considered. The full stresses on this Desargues configuration are given in Figure~\ref{desarguesintegerfig}. a) The stresses along each line sum to zero, the combinatorial balance. In addition, the stresses, thought of as weights, balance with respect to the fulcrum indicted. (In the presence of the combinatorial balance, the fulcrum can be moved to any convenient point along the line, say the middle point.)
b) At each point, the incident line stresses much sum to zero, the combinatorial balance, and in addition the lines, again thought of as carrying weights, are balanced along a line parallel to the position vector, with the fulcrum placed nearest the point. \label{stressinterp07fig}}
\end{figure}

By considering the sum $\Sigma_{p_i \in P} \omega_{i,j}(x_i,y_i) \cdot (a_j,b_j)=0$, we get $\sum_{p_i \in P : (p_i, l_j) \in I} \omega_{i,j} = 0$ where the sum is taken over all finite points incident to $l_j$. Other than finiteness,
this balance is only by incidence and is not affected by the particular embodiment, and so we call this
equation part of the {\em combinatorial balance}, see Figure~\ref{stressinterp07fig}a.

We can get a scalar equation also from the second equation of ~(\ref{stressspliteq}) by considering the sum of norms $\sum_{p_i \in P} \omega_{i,j}|\mathbf{p}_i \times \mathbf{l}_j|=0$. This equation is equivalent to
$\sum_{i} \omega_{i,j}|\mathbf{p}_i| \sin(\theta_{i,j}) = 0$, where $\theta_{i,j}$ is the angle between the vectors $\mathbf{p}_i$ and $\mathbf{l}_j$. If we consider the vector $l_j$ as a force acting on the points incident to $l_j$, then the second equation says that the moment of the points, weighted by
    $\omega_{i,j}$, is zero with respect to the point on the line closest the origin. 
    
    If we take each vector as acting at the
    point in question, this last equation says that the moment of the points, weighted by
    $\omega_{i,j}$, is zero with respect to the point on the line closest the origin.  If one wants an interpretation in the plane, one can rewrite the equilibrium equation as
     $ \sum_{i} \omega_{i,j}\mathbf{n}_j \times(\mathbf{p}_i \times \mathbf{l}_j) = \mathbf{0}$, where $\mathbf{n}_j$ is the unit normal for $l_j$, so that the individual vectors $\mathbf{n}_j \times(\mathbf{p}_i \times \mathbf{l}_j)$ may be regarded as position vectors within the line $l_j$ emanating from the point on $l_j$ closest the origin.
 This completes the second balance. See Figure~\ref{stressinterp07fig}b. 

But the stress also sums to zero on the line coordinates, and here, not surprisingly, for each point $p_i$
 we split the equilibrium equation as
$$ \sum_{l_j \in L} \omega_{i,j}\mathbf{l}_j\cdot \mathbf{p}_i = 0, \qquad
\sum_{l_j \in L} \omega_{i,j}\mathbf{l}_j \times \mathbf{p}_i = \mathbf{0},
$$
and proceed as before. The first gives $\sum_{l_j \in L: (p_i, l_j) \in I} \omega_{i,j} = 0$, where the sum is taken over lines incident to point $p_i$
which do not go through the origin.  

Lastly, we must consider $\sum_{j} \omega_{i,j}\mathbf{l}_j \times \mathbf{p}_i = \mathbf{0}$. Each vector
can again be interpreted as providing a moment to turn the plane into the third dimension with the vector $\mathbf{p}_i$ as an axis,
see Figure~\ref{stressinterp07fig}c, and
physically we may think of the equation as expressing that the total moment here is also zero.
Again we can avoid the third dimension by $\sum_{j} \omega_{i,j}\mathbf{n}_i\times(\mathbf{l}_j \times \mathbf{p}_i) = \mathbf{0}$, where $\mathbf{n}_i$ is the unit normal
in direction $\mathbf{p}_i$
and say the total screw of the incident lines at point $r_i$ is zero.

Therefore, to be an equilibrium stress on the projective configuration, the balance of $\omega$ must be three-fold; with total moment of all incident
points on a line $l_j$ with respect to the closest point  of $l_j$ to the origin must be zero; the screws of each of the lines incident with a fixed point $p_i$ along its position line must also sum to zero, thirdly the stress must be combinatorial, with the rows of the combinatorial incidence matrix corresponding to all incidences between finite points and lines summing to zero.

\subsection*{Example: Stress on the Pappus configuration.}
In Figure~\ref{pappussintegerfig}
\begin{figure}[htb]
\centering
\captionsetup{width=0.9\textwidth}
\includegraphics[scale=.8]{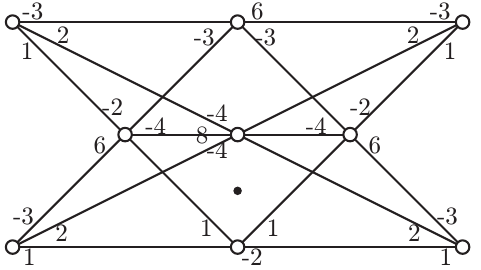}
\caption{The origin is indicated by a black dot.
The vertices are $(\pm 4, -1)$, $(\pm 2, 1)$, $(\pm 4, 3)$, $(0,-1)$, $(0,1)$ and $(0,3)$.\label{pappussintegerfig}}
\end{figure}
we have indicated the stresses at each of the $27$ point-line incidences of a Pappus configuration.

Note that the stress does not respect the vertical reflective symmetry of the figure, as
would be the case, say, in a bar-joint framework.  Compare
in particular the stress values at points $(3,0)$ and $(3,6)$, where the numerical values
assigned to the incidences are the same, but associated to incidences with unexpected lines.

To follow the calculation of the stress via the $3$-fold balance, we may begin with point $(0,-1)$,
and assign, say $1$ to the symmetrically placed diagonal line incidences, and then the combinatorial balance
requires that the horizontal line incidence be stressed by $-2$. Then the line balance on the bottom horizontal line
requires the left right point incidences to be weighted the same, both $+1$, as required by the combinatorial balance,
see Figure~\ref{pappussintegercompbfig} left.
\begin{figure}[htb]
\centering
\includegraphics[scale=.6]{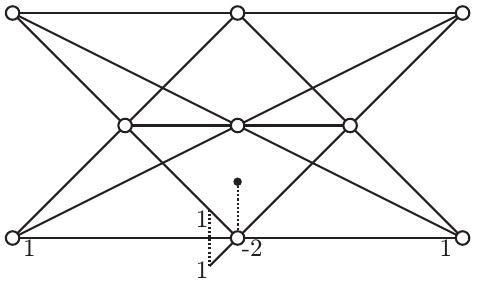}
\raisebox{.22cm}{\includegraphics[scale=.6]{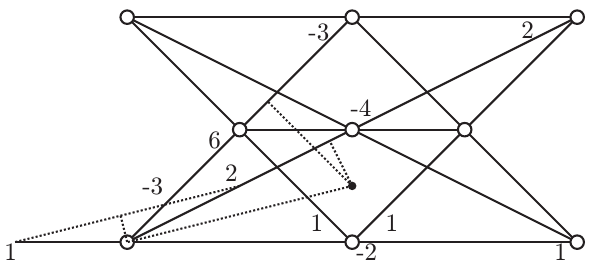}}
\caption{Initiating the balance.\label{pappussintegercompbfig}}
\end{figure}

Now we may consider the point balance at $(-4,-1)$. The fulcrum is indicated, but an easy calculation shows that,
in the presence of the combinatorial balance, the fulcrum may be taken, for the line or the point balance,
at any convenient place, say at the middle intersection point. With this choice it is apparent that the incidence
with the horizontal line stressed with $1$ balances with a stress of $2$ to the far intersection, with
the combinatorial balance giving a stress of $-3$ to the middle incidence. See Figure~\ref{pappussintegercompbfig} right.
The stress now propagates using the line balance on the two remaining lines incident to $(-4,-1)$ in an obvious way.

As one continues through the configuration, assigning stresses to incidences, no incompatibility arises, and
the stress of Figure~\ref{pappussintegerfig}, which balances all points, lines, and the combinatorics,
we find that this stress generates space of all equilibrium stresses on this configuration. \quad

\qedsymbol

\subsection*{Example: Stress on the Desargues configuration.}
In this placement of the Desargues configuration, see Figure~\ref{desarguesintegerfig}, we have chosen small integer coordinates to make the verification
via the $3$-fold balance mostly obvious.
\begin{figure}[htb]
\centering
\captionsetup{width=0.9\textwidth}
\includegraphics[scale=.75]{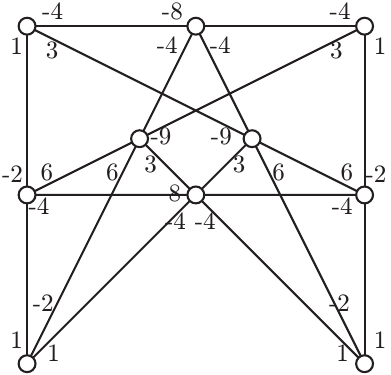}
\caption{The coordinates for the vertices are $(\pm 3, 0)$, $(\pm 3, 3)$, $(\pm 3, 6)$, $(\pm 1, 4)$, $(0,3)$ and $(0,6)$.\label{desarguesintegerfig}}
\end{figure}
To see that this is indeed the Desargues configuration, one may take the point of perspectivity to be $(-3,0)$ and the line of perspectivity to have line coordinates $(-1/3,0)$.

\section{Open problems and future work}

\subsubsection*{Self-stresses}
How can we detect a dependence of an incidence structure which represents a projective theorem not captured by counting? Self-stresses are one way.
Self-stresses imply theorems. It would be interesting to further investigate self-stress spaces of configurations. For example, if in the Desargues Configuration, the line of perspective is incident with the point of perspective, does the dimension of the self-stress space change?

Are there any new projective theorems found by self-stresses in the plane? 
What is the most flexible self-stressed configuration in the plane?  Is it characterized by the maximum undercount?


\subsubsection*{The pure conditions}

While studying  infinitesimally rigid bar and joint frameworks  explicitly as projective, the connection was captured in the work of Neal White and Walter Whiteley which showed the realizations which were generically independent became dependent on the zeros of a projectively invariant polynomial \cite{WW1,WW2}. They called this polynomial  the "pure condition". Its zeros correspond to infinitesimally flexible (or self-stressed) frameworks on graphs that satisfy the isostatic count. Does this transfer to the theory of points, lines and incidences and the associated projective rigidity matrix?

Given a configuration $S=(P,L,I)$ with $n =|I|$ with $n < 2|P|+2|L| -8$, the projective rigidity matrix might have independent rows. For the rows to be independent there must be some $ n\times n$ minor which has a nonzero polynomial as the determinant.

For the usual rigidity matrix, there is an analysis which shows that all the polynomials for different minors are, up to simple factors and signs, ``the same".  
\cite{WW1,WW2,Schulze-Whiteley-2021}.

In the previous work of White and Whiteley for bar and joint frameworks, there was a focus of ``tie-downs" which identify the columns being deleted to square-off the matrix, which can also be captured by special added rows which square up the matrix. White and Whiteley \cite{WW1,WW2} traced the impact of shifting one edge in a tie-down to a different edge.  It is a small polynomial multiplier of the initial determinant. 

In the setting of projective rigidity, we can again ask if there is an algebraic condition for projective infinitesimal flexibility of an incidence geometry that has isostatic realizations. A tie-down in the context of projective rigidity would consist of pinning four points in general position, four lines in general position, three point and a line in general position or three lines and a point in general position, see Theorem~\ref{pinningagain}.


\subsubsection*{Extensions to higher dimensions}

Our incidence structure $S=(P,L; I)$ is called an incidence geometry of rank~2. Instead of just using points and lines, we could use also higher dimensional affine subspaces of some vector space and ask about independence and dependence of incidences there. 
Point hyperplane incidence structured are well studied in algebraic geometry. Recently the existence of realizations of point-line configurations in various settings was studied in~\cite{clarke2024}.
Michel Chasles asked, about 200 years ago, for geometric conditions under which 10 given points lie on a quadric surface in 3-space~\cite{Chasles}~\cite{SW1991}, and this question is still unanswered. If enough triples of points are collinear then there are projective constructions.  A 3-dimensional analogue of Pappus's Theorem is presented in~\cite{phadke1968}. Which other theorems have higher dimensional analogues?

\subsubsection*{Projective Constructions}
Can we find nice algebraic descriptions of infinitesimal projective motions? Can we find non-trivial inductive constructions preserving motions? 
Can we describe inductive constructions preserving properties such as independence or symmetry?

\subsubsection*{Rigid $v_k$-configurations}

A combinatorial $v_k$-configuration is an incidence geometry with $v$ points and $v$ lines such that there are $k$ points on each line and $k$ lines through each point. If $k=3$ and $v > 8$, then the configuration will be flexible, as $|I|=3v < 2|P|+2|L|-8=4v-8$. Can we construct families of examples of rigid $v_k$-configurations for $k>3$?

\subsubsection*{Symmetry-induced motions}

Does there exist an example of a configuration that has a symmetry-induced motion, that is, a motion which does not exist for a realization  without that symmetry? In other words, can we find an example of an incidence geometry that has a symmetric realization, and a realization that does not have that symmetry, where the symmetric realization is more projectively flexible than the realization without the symmetry? 

\section*{Acknowledgements}
Most of our results presented here were obtained during the
Focus Program on Geometric Constraint Systems
July 1 - August 31, 2023 and we gratefully acknowledge the productive work environment provided by the Fields Institute for Research in Mathematical Sciences, as well as its financial support. The contents of this article are solely the responsibility of the authors and do not necessarily represent the official views of the Institute.

\bibliography{projective.bib}
\bibliographystyle{plain}

\end{document}